\theoremstyle{plain}
\newtheorem{thm}{Theorem}[section]
\newtheorem{lem}[thm]{Lemma}
\newtheorem{prop}[thm]{Proposition}
\theoremstyle{definition}
\newtheorem{rem}[thm]{Remark}
\def \r{\mbox{${\mathbb R}$}}
\def \h{\mbox{${\mathbb H}$}}
\def \s{\mbox{${\mathbb S}$}}
\DeclareMathOperator{\grad}{grad}
\DeclareMathOperator{\trace}{trace}
\DeclareMathOperator{\Res}{Res}
\DeclareMathOperator{\Ricci}{Ricci}
\DeclareMathOperator{\dive}{div}
\begin{document}
	
\title{On the existence of closed biconservative surfaces in space forms}

\author{S. Montaldo}
\address{Universit\`a degli Studi di Cagliari\\
Dipartimento di Matematica e Informatica\\
Via Ospedale 72\\
09124 Cagliari, Italia}
\email{montaldo@unica.it}

\author{A. P\'ampano}
\address{Department of Mathematics\\
University of the Basque Country\\
Aptdo. 644\\
48080, Bilbao, Spain.}
\email{alvaro.pampano@ehu.eus}

\date{\today}

\begin{abstract}
Biconservative surfaces of Riemannian 3-space forms $N^3(\rho)$, are either constant mean curvature (CMC) surfaces or rotational linear Weingarten surfaces verifying the relation $3\kappa_1+\kappa_2=0$ between their principal curvatures $\kappa_1$ and $\kappa_2$. We characterise the profile curves of the non-CMC biconservative surfaces as the critical curves for a suitable curvature energy. Moreover, using this characterisation, we prove the existence of a discrete biparametric family of closed, i.e. compact without boundary, non-CMC biconservative surfaces in the round 3-sphere, $\mathbb{S}^3(\rho)$. However, none of these closed surfaces is embedded in $\mathbb{S}^3(\rho)$.
\end{abstract}

\thanks{Work partially supported by Fondazione di Sardegna and Regione Autonoma della Sardegna (Project GESTA). The second author has been partially supported by MINECO-FEDER grant PGC2018-098409-B-100, Gobierno Vasco grant IT1094-16 and Programa Predoctoral de Formaci\'on de Personal Investigador No Doctor del Gobierno Vasco, 2015. He also wants to thank the Department of Mathematics and Computer Science of the University of Cagliari for the warm hospitality during his stay.}

\subjclass[2010]{53A10, 53C42}

\keywords{Biconservative Surfaces, Binormal Evolution Surfaces, Curvature Energy Extremals, Linear Weingarten Surfaces, Rotational Surfaces, Space Forms.}
\maketitle

\section{Introduction}

A hypersurface $M^{n-1}$ in a $n$-dimensional Riemannian manifold $N^n$ is called {\it biconservative} if
\begin{equation*}
2S_\eta(\grad H)+ (n-1) H \grad H-2 H\Ricci(\eta)^{\top}=0
\end{equation*}
where $\eta$ is a unit normal vector field, $S_\eta$ is the shape operator, $H=\trace S_\eta/(n-1)$ is the mean curvature function and $\Ricci(\eta)^{\top}$ is the tangent component of the Ricci curvature of $N$ in the direction of $\eta$.

The notion of biconservative hypersurfaces was introduced in \cite{Caddeo-Montaldo-Oniciuc-Piu}, as we shall detail in the next section, where the authors classify, locally, biconservative surfaces into $3$-dimensional space forms (see also \cite{Fuannali}). 
More precisely, in \cite{Caddeo-Montaldo-Oniciuc-Piu}, it was proved that a biconservative surface of a 3-space form, $N^3(\rho)$, is either a CMC surface or a rotational surface. Moreover, in the same paper, a relation between the Gaussian curvature, $K$, and the mean curvature, $H$, of the non-CMC biconservative surfaces of $N^3(\rho)$ was stated. Indeed, these rotational surfaces verify
\begin{equation}
K=-3H^2+\rho\, . \label{relationcurvatures-intro}
\end{equation}
Throughout this paper, we are going to understand a \emph{Weingarten surface} as a surface of $N^3(\rho)$ where the two principal curvatures $\kappa_1$ and $\kappa_2$ satisfy a certain relation $\Phi(\kappa_1,\kappa_2)=0$. These surfaces were introduced by Weingarten in \cite{Weingarten} and its study occupies an important role in classical Differential Geometry.

The simplest relation $\Phi(\kappa_1,\kappa_2)=0$ is the pure linear relation, that is, 
\begin{equation}\kappa_1=a\kappa_2\,,\quad a\in\r\,. \label{linearrelation-intro}
\end{equation}
Rotational surfaces in Riemannian 3-space forms verifying the relation \eqref{linearrelation-intro} between their principal curvatures were geometrically described by Barros and Garay in \cite{Barros-Garay} where they gave a variational  characterisation of the parallels. On the other hand, in \cite{Lopez-Pampano}, the profile curve of rotational linear Weingarten surfaces of $\mathbb{R}^3$ was characterised as a critical curve for a {\it curvature energy problem}.

In our case, relation \eqref{relationcurvatures-intro} implies that non-CMC biconservative surfaces are linear Weingarten surfaces for $a=-1/3$ in \eqref{linearrelation-intro}, as it was first pointed by Fu and Li in \cite{Fu-Li}. Thus, we have the following description. 

\begin{prop}\label{LW} The non-CMC biconservative surfaces of a 3-dimensional space form $N^3(\rho)$ are rotational linear Weingarten surfaces verifying
\begin{equation}
3\kappa_1+\kappa_2=0\,, \label{LWrelationbiconservative}
\end{equation}
where $\kappa_1=-\kappa$ is minus the curvature of the profile curve. Moreover, let $S\subset N^3(\rho)$ be a rotational linear Weingarten surface verifying \eqref{LWrelationbiconservative}, then $S$ is a biconservative surface.
\end{prop}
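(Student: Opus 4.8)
The plan is to prove the two implications separately, exploiting one structural simplification that holds in any space form: since $N^3(\rho)$ is Einstein with $\Ricci = 2\rho\,\langle\cdot,\cdot\rangle$, the vector $\Ricci(\eta) = 2\rho\,\eta$ is normal, so $\Ricci(\eta)^{\top}=0$. Consequently the biconservative equation recalled in the Introduction collapses, for $n=3$, to the much simpler condition $S_\eta(\grad H)+H\,\grad H=0$. This reduced form is what I would use throughout.

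For the forward implication I would start from the two facts stated in the Introduction for a non-CMC biconservative surface of $N^3(\rho)$: that it is rotational, and that it obeys \eqref{relationcurvatures-intro}, namely $K=-3H^2+\rho$. Combining this with the Gauss equation, which in a space form reads $K=\kappa_1\kappa_2+\rho$, and with $H=(\kappa_1+\kappa_2)/2$, I eliminate $K$ and $H$ to obtain $\kappa_1\kappa_2=-\tfrac34(\kappa_1+\kappa_2)^2$, which rearranges to the factorisation $(3\kappa_1+\kappa_2)(\kappa_1+3\kappa_2)=0$. At each point at least one factor vanishes; a point where both vanish has $\kappa_1=\kappa_2=0$ and is thus totally geodesic. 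To single out the first factor I would pass to the open set where $\grad H\neq0$, which is nonempty because the surface is non-CMC. There the reduced equation $S_\eta(\grad H)+H\,\grad H=0$ applies, and since $H$ is invariant under the rotations preserving the surface it is constant along parallels, so $\grad H$ is tangent to the meridians. As meridians are lines of curvature, $\grad H$ is an eigenvector of $S_\eta$ with eigenvalue $\kappa_1=-\kappa$, giving $S_\eta(\grad H)=\kappa_1\grad H$ and hence $(\kappa_1+H)\grad H=\tfrac12(3\kappa_1+\kappa_2)\grad H=0$. Thus $3\kappa_1+\kappa_2=0$ on this dense set, and by continuity everywhere, which is exactly \eqref{LWrelationbiconservative} with $a=-1/3$ in \eqref{linearrelation-intro}.

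For the converse I would run the same reduction in the opposite direction. Given a rotational linear Weingarten surface with $3\kappa_1+\kappa_2=0$, the symmetry argument again places $\grad H$ along the meridians, so $S_\eta(\grad H)=\kappa_1\grad H$ and the reduced biconservative equation becomes $(\kappa_1+H)\grad H=\tfrac12(3\kappa_1+\kappa_2)\grad H$, which vanishes identically by hypothesis. Hence $S$ is biconservative, with no appeal needed to \eqref{relationcurvatures-intro}.

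The routine parts are the algebraic factorisation and the Einstein computation giving $\Ricci(\eta)^{\top}=0$. The step demanding the most care is the selection of the correct factor in the forward direction: I must rule out a mixed locus, which I do by noting that simultaneous vanishing of both factors forces a totally geodesic (hence CMC) point, and then use the non-CMC hypothesis together with connectedness of the profile curve to propagate $3\kappa_1+\kappa_2=0$ globally. The geometric input that underlies both directions—namely that rotational symmetry forces $\grad H$ into the meridian principal direction—should be stated explicitly, as it is precisely what identifies the vanishing factor as $3\kappa_1+\kappa_2$ rather than $\kappa_1+3\kappa_2$.
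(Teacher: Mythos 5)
Your argument is correct, and for the forward direction it follows the same skeleton the paper uses -- import from \cite{Caddeo-Montaldo-Oniciuc-Piu} that a non-CMC biconservative surface of $N^3(\rho)$ is rotational and satisfies \eqref{relationcurvatures-intro}, then combine with the Gauss equation to get the factorisation $(3\kappa_1+\kappa_2)(\kappa_1+3\kappa_2)=0$ -- but you add a detail the paper leaves to the citation of \cite{Fu-Li}, namely why the factor $3\kappa_1+\kappa_2$ (with $\kappa_1$ the meridian principal curvature) is the one that vanishes. Your mechanism for this, writing the reduced biconservative equation as $S_\eta(\grad H)+H\grad H=0$ and using that rotational symmetry forces $\grad H$ into the meridian eigendirection so that the equation collapses to $\tfrac12(3\kappa_1+\kappa_2)\grad H=0$, is exactly right; note that it actually makes the detour through \eqref{relationcurvatures-intro} and the factorisation redundant, since $(\kappa_1+H)\grad H=0$ already gives \eqref{LWrelationbiconservative} directly on $\{\grad H\neq 0\}$. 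For the converse your route is genuinely different from the paper's: the paper only establishes this direction later, as Theorem \ref{converse}, by running the variational characterisation through binormal evolution surfaces, whereas you verify \eqref{biconserveq} pointwise from the same identity $S_\eta(\grad H)+H\grad H=\tfrac12(3\kappa_1+\kappa_2)\grad H$; this is more elementary and self-contained, at the cost of not producing the constructive description that the paper needs afterwards. The only point to state more carefully is the continuity step: you pass from $\{\grad H\neq 0\}$ to the whole surface by calling that set dense, which is a convention about what ``non-CMC biconservative surface'' means (the local classification of \cite{Caddeo-Montaldo-Oniciuc-Piu} is carried out on the open set where $\grad H\neq 0$, and the paper implicitly works there too), so you should either adopt that convention explicitly or restrict the statement to that open set.
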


Throughout this paper, we are going to consider the following \emph{bending-type energy problem}. More precisely,  we consider the  \emph{curvature energy functional} given by
\begin{equation}\label{curveture-energy}
\mathbf{\Theta}(\gamma):=\int_\gamma \kappa^{1/4}\, , 
\end{equation}
acting on the space of arc-length parametrized non-geodesic curves $\gamma:I\to N$, where $I$ is a real interval. Here, $\kappa$ denotes the curvature of $\gamma$. In the Euclidean 3-space, $\mathbb{R}^3$, this functional has been studied in \cite{Garay-Pampano}, where its critical curves have been used to produce solutions of a generalized Ermakov-Milne-Pinney equation. On the other hand, in the unit round 3-sphere, $\mathbb{S}^3(1)$, it was analysed in \cite{Arroyo-Garay-Mencia}. For more details about this functional see \S 3.

The first main result of the paper is the following characterisation.

\begin{thm}\label{variationalcharacterization} Let $S$ be a non-CMC biconservative surface of a 3-dimensional space form $N^3(\rho)$. Then, locally, $S$ is a rotational surface whose profile curve verifies the Euler-Lagrange equations for the functional \eqref{curveture-energy}.
\end{thm}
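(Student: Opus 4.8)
The plan is to convert the linear Weingarten condition of Proposition~\ref{LW} into the Euler--Lagrange equation of the functional \eqref{curveture-energy}. By Proposition~\ref{LW}, $S$ is locally a rotational surface whose profile curve $\gamma$ lies in a totally geodesic surface $N^2(\rho)\subset N^3(\rho)$, and its principal curvatures satisfy $\kappa_2=-3\kappa_1$, where $\kappa_1=-\kappa$ is minus the curvature of $\gamma$. First I would record the structure equations of such a rotational surface. Parametrizing $\gamma$ by arc length, letting $x$ denote the distance to the rotation axis and $\psi$ the angle the tangent of $\gamma$ makes with the parallels, the Euclidean prototype reads
\begin{equation*}
\dot x=\cos\psi\,,\qquad \kappa_1=\dot\psi\,,\qquad \kappa_2=\frac{\sin\psi}{x}\,,
\end{equation*}
so that $\kappa$ and $\kappa_1$ agree up to the orientation fixed in Proposition~\ref{LW}. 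In a general space form these are replaced by their warped analogues, with $x$ superseded by the warping function $h$ of $N^3(\rho)$, so that $\rho$ enters only through $h$. This step is standard but must be set up so that all three ambient geometries are treated at once.

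Next I would substitute the Weingarten relation $\kappa_2=-3\kappa_1$ into these structure equations. Using $\dot x=\cos\psi$ together with $\kappa_2/\kappa_1=-3$ produces a separable first-order relation between $\kappa$ and $\psi$; in the flat model it integrates to
\begin{equation*}
\kappa=B\,\sin^{4}\psi\,,\qquad B=\mathrm{const}\,,
\end{equation*}
the exponent $4=1-\kappa_2/\kappa_1$ coming directly from the coefficient $3$ in \eqref{LWrelationbiconservative}, and to the corresponding expression involving $h$ in general. This conservation law, the reduction of the Weingarten condition to a first integral along $\gamma$, is the heart of the argument.

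Finally I would eliminate $\psi$. Differentiating the first integral once more along $\gamma$ and using the structure equations to remove $\psi$ and $\dot\psi$ should yield a second-order autonomous equation for $\kappa$ of the form
\begin{equation*}
\ddot\kappa-\frac{7}{4}\,\frac{\dot\kappa^{2}}{\kappa}+4\kappa^{3}-\frac{4\rho}{3}\,\kappa=0\,.
\end{equation*}
On the other hand, for a functional $\int_\gamma P(\kappa)$ on curves in $N^2(\rho)$ the extremal equation is $\ddot{P'}+(\kappa^{2}+\rho)P'-\kappa P=0$; specialising $P(\kappa)=\kappa^{1/4}$ and clearing denominators reproduces exactly the displayed ODE, which proves that $\gamma$ is a critical curve of \eqref{curveture-energy}. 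The admissible power is forced: for a generic ratio $\kappa_1=a\kappa_2$ the same computation selects $P=\kappa^{p}$ with $p=a/(a-1)$, and $a=-1/3$ gives $p=1/4$, recovering \cite{Barros-Garay,Lopez-Pampano} as the flat case.

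The step I expect to be the main obstacle is the curved bookkeeping: confirming that eliminating $\psi$ produces the term $-\frac{4\rho}{3}\kappa$ with precisely this coefficient requires carrying the warping function $h$ and the $\rho$-dependent correction to the geodesic curvature of $\gamma$ through the computation consistently, so that $\mathbb{S}^3(\rho)$, $\mathbb{R}^3$ and $\mathbb{H}^3$ are covered by a single argument rather than three separate checks. The explicit flat computation is a reliable template, but the constant-curvature terms must be tracked carefully to land on the coefficient demanded by the $P=\kappa^{1/4}$ Euler--Lagrange equation.
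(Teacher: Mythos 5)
Your argument is correct in substance but follows a genuinely different route from the paper's. The paper also starts from Proposition~\ref{LW}, but instead of the classical structure equations it works with the length $G$ of the rotational Killing field: the Gauss--Codazzi compatibility condition \eqref{Gauss-Codazzi} together with the Weingarten relation (which becomes $G_{ss}=G(3\kappa^2-\rho)$) produces two ODEs, and the substitution $G(\kappa)=\dot P(\kappa)$ turns the first of them into the Euler--Lagrange equation of $\int_\gamma (P(\kappa)+\lambda)\,ds$; solving the resulting ODE for $P$ then \emph{derives} $P(\kappa)=\kappa^{1/4}$ rather than verifying it. Your route is more elementary and makes the conservation law visible, but it presupposes the exponent and, as you anticipate, the curved bookkeeping is the delicate point --- and it is slightly more delicate than replacing $x$ by a single warping function. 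Two different warping functions enter: the orbit radius $h(x)=\sin(\sqrt{\rho}\,x)/\sqrt{\rho}$ governs $\kappa_2=(h'/h)\sin\psi$, while the Fermi warping $g(x)=\cos(\sqrt{\rho}\,x)$ of the totally geodesic $N^2(\rho)$ along the axis governs the Liouville decomposition $\kappa_g=\dot\psi+(g'/g)\sin\psi$ of the profile's curvature. The separable ODE coming from $3\kappa_1+\kappa_2=0$ integrates to $g\,h^{1/3}\sin\psi=\mathrm{const}$, which combined with $\kappa=\tfrac{1}{3}(h'/h)\sin\psi$ gives the clean curved first integral $\kappa=\mathrm{const}\cdot h^{-4/3}$ (equivalently $G\propto\kappa^{-3/4}$, which is exactly the paper's \eqref{Gs}); your $\kappa=B\sin^4\psi$ is the flat specialisation, where $g=h'=1$. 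Eliminating $x$ and $\psi$ from these relations does reproduce the first integral \eqref{firstintegral} and hence your displayed second-order ODE, which is \eqref{Euler-Lagrange} after clearing denominators, so the endgame is sound; your general formula $p=a/(a-1)$ is also consistent with the literature you cite. Two small points to tidy up: dispose explicitly of the constant-curvature (isoparametric) case, where $\cos\psi\equiv 0$ and the separation of variables degenerates --- as in the paper, it forces $S$ to be CMC and is excluded by hypothesis --- and note that at isolated zeros of $\sin\psi$ or $\cos\psi$ the first integral extends by continuity.
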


A converse of Theorem \ref{variationalcharacterization} is also true and gives us a way of constructing all non-CMC biconservative surfaces of 3-space forms, as it will be explained in Theorem \ref{converse}.

A natural problem is to investigate, using the variational characterisation, the existence of closed (i.e. compact without boundary) non-CMC biconservative surfaces into $N^3(\rho)$. This problem was raised for the first time in \cite{Nistor} and then in \cite{FLO,FNO}. In \S 5 we tackle this problem and solve it. We first prove, in Proposition~\ref{non-existence-closed}, that there are no closed non-CMC biconservative surfaces in 3-space forms, $N^3(\rho)$, with $\rho\leq 0$. While, for the case of $\mathbb{S}^3(\rho)$, we prove the following existence result.

\begin{thm}\label{complete-case} There exists a discrete biparametric family of closed non-CMC biconservative surfaces in the round 3-sphere, $\mathbb{S}^3(\rho)$. However, there are no closed non-CMC biconservative surfaces embedded in $\mathbb{S}^3(\rho)$.
\end{thm}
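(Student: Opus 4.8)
The plan is to reduce the existence question to a closure condition on the profile curves characterised in Theorem~\ref{variationalcharacterization}. Since every non-CMC biconservative surface is rotational, the surface is closed precisely when its profile curve is a closed curve in the appropriate half-plane (or totally geodesic hemisphere) of $\mathbb{S}^3(\rho)$, whose rotation sweeps out a compact surface without boundary. Thus I would first translate the problem into the language of the Euler--Lagrange equations for $\mathbf{\Theta}$: the profile curve has curvature $\kappa$ satisfying a second-order ODE, and by the standard first-integral argument for curvature energies of the form $\int f(\kappa)$, there is a conserved quantity along the critical curves. I expect $\kappa$ to oscillate periodically between two positive bounds $\kappa_{\min}$ and $\kappa_{\max}$ determined by the energy level, so the profile curve is a \emph{wavelike} curve invariant under a screw/translation motion.

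Next I would analyse the geometry of the profile curve in $\mathbb{S}^3(\rho)$ more explicitly. In the spherical case, the ambient geometry provides the compactness that $\rho\le 0$ lacks (which is why Proposition~\ref{non-existence-closed} rules out the flat and hyperbolic cases). Writing the rotational surface in terms of the distance to the axis of rotation and using the Weingarten relation $3\kappa_1+\kappa_2=0$ with $\kappa_1=-\kappa$, I would express the second principal curvature $\kappa_2$ in terms of the position function, and obtain the profile curve as the solution of a quadrature. The curve closes up into a compact profile if and only if, over one period of the oscillation of $\kappa$, the curve returns to its starting configuration; concretely, the \emph{progression angle} swept by the profile curve around the axis over one period of $\kappa$ must be a rational multiple of $2\pi$. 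This is where the two integer parameters enter: choosing this angle to equal $2\pi p/q$ for coprime integers $p,q$ (within an admissible range) yields a closed curve that winds $p$ times around while the curvature completes $q$ oscillations, giving the announced \emph{discrete biparametric family}.

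The main obstacle will be establishing the rationality/closure condition rigorously: I must show that the period integral defining the progression angle is a continuous, non-constant function of the energy parameter, so that by the intermediate value theorem it attains every rational value $2\pi p/q$ in some open interval, thereby producing infinitely many closed examples indexed by $(p,q)$. This requires controlling the monodromy integral $\int \dot\theta\, ds$ over a period and proving it varies through an interval of positive length as the first integral is varied; degenerate or limiting cases (for instance, as $\kappa_{\min}\to\kappa_{\max}$, where the profile degenerates to a parallel circle, or as the energy approaches the CMC boundary) must be examined to pin down the admissible range of the angle. The monotonicity or at least the non-constancy of the period function is typically the delicate analytic point and may require an asymptotic expansion of the integral near the degenerate ends of the parameter interval.

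Finally, for the non-embeddedness assertion I would argue that any closed profile curve of this wavelike type necessarily has self-intersections, or that the rotational surface it generates must self-intersect. Since the admissible progression angle per period is bounded strictly away from the value $2\pi$ that would be needed for a simple closed profile (the curvature genuinely oscillates, forcing the curve to turn back on itself), no choice of $(p,q)$ can yield an embedded surface; I would make this precise by showing the total turning of the closed profile is incompatible with simplicity, so the generated surface always has points of self-intersection. This completes the proof that the family is nonempty and discrete but contains no embedded member.
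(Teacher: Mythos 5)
Your plan follows essentially the same route as the paper: reduce closedness to the condition that the progression angle of the profile curve per period of $\kappa$ equal $2\pi n/m$, pin down the admissible range of that angle by computing the period integral's behaviour at the two degenerate ends of the parameter interval (this is exactly the paper's Lemma~\ref{lemma}, which shows $I(d)=\sqrt{\rho\,d}\,\Lambda(d)$ is strictly decreasing with range $(\pi,\sqrt{2}\pi)$), obtain a closed surface for each coprime pair $(m,n)$ with $m<2n<\sqrt{2}\,m$, and exclude embeddings because no value $2\pi/m$ lies in that range. The only off note is your heuristic that the oscillating curvature forces the profile curve to ``turn back on itself'': in fact it never does (it stays transverse to the orbits of the rotation, cf.\ Remark~\ref{rem-simple}), and non-embeddedness follows solely from the failure of the angle quantization for $n=1$ --- which is precisely the fallback argument you propose.
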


\section{Biharmonic Maps and Biconservative Immersions}

\emph{Harmonic maps} $\varphi\,: (M,g)\rightarrow (N,h)$ between Riemannian manifolds are the critical points of the energy functional
\begin{equation}
E(\varphi)=\frac{1}{2}\int_M \lvert d\varphi \rvert ^2 v_g\, . \nonumber
\end{equation}
Their corresponding Euler-Lagrange equation is given by the vanishing of the tension field
\begin{equation}
\tau(\varphi)=\trace \nabla d\varphi\,. \label{tensionfield}
\end{equation}
In \cite{Eells-Sampson}, Eells and Sampson suggested to study \emph{biharmonic maps}, which are the critical points of the \emph{bienergy} functional
\begin{equation}
E_2(\varphi)=\frac{1}{2}\int_M \lvert \tau(\varphi)\rvert ^2 v_g\, . \label{biharmonicenergy}
\end{equation} 
The first variation formula of the bienergy was derived by Jiang, \cite{Jiang-1}. Moreover, he showed that the Euler-Lagrange equation for $E_2$ is 
\begin{equation}
\tau_2(\varphi)=-J(\tau(\varphi))=-\Delta \tau(\varphi)-\trace R^N(d\varphi,\tau(\varphi))d\varphi=0\, , \label{bitensionfield}
\end{equation}
where $J$ is the Jacobi operator of $\varphi$. The curvature operator of $(N,h)$ is denoted by $R^N$ and it can be computed as
\begin{equation}
R^N(X,Y)=\nabla_X\nabla_Y-\nabla_Y\nabla_X-\nabla_{[X,Y]}\, , \nonumber
\end{equation}
for any vector fields $X$ and $Y$ in $N$, where $\nabla$ denotes the Levi-Civita connection. Finally, the symbol $\Delta$ in \eqref{bitensionfield} represents the rough Laplacian on sections $\varphi^{-1}(TN)$, which, for a local orthonormal frame $\{e_i\}_{i=1}^m$ on $M$, is defined by
\begin{equation}
\Delta=-\sum_{i=1}^m\left(\nabla_{e_i}^\varphi\nabla_{e_i}^\varphi-\nabla^\varphi_{\nabla_{e_i}^M e_i}\right). \nonumber
\end{equation}
The equation $\tau_2(\varphi)=0$ is called the \emph{biharmonic equation}. Since the Jacobi operator $J$ is linear, it is easy to check that harmonic maps are always biharmonic.

If $\varphi:M\to N$ is an isometric immersion the decomposition of the bitension field with respect to its normal and
tangent components was obtained with contributions of \cite{BMO13,C84,LM08,O10} and for hypersurfaces it can be summarised in the following theorem.

\begin{thm}\label{decompbitension} Let $\varphi:M^{n-1}\to N^n$ be an isometric immersion with unit normal vector field $\eta$ and mean curvature vector field $\mathbf{H}=H\eta$. Then, the normal and tangential components of $\tau_2(\varphi)=0$ are respectively
\begin{eqnarray}
\Delta H+H \lvert S_\eta \rvert^2-H \Ricci(\eta,\eta)&=&0\, ,\nonumber\\
2S_\eta(\grad H)+ (n-1) H \grad H-2 H\Ricci (\eta)^T&=&0\, , \nonumber
\end{eqnarray}
where $S_\eta$ is the shape operator and $\Ricci(\eta)^T$ is the tangent component of the Ricci curvature of $N$ in the direction of the vector field $\eta$.
\end{thm}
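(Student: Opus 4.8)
The plan is to derive both equations at once, by inserting the explicit form of the tension field of an isometric immersion into the biharmonic equation \eqref{bitensionfield} and then splitting the outcome into its normal and tangential components. Since $(\nabla d\varphi)(X,Y)$ is exactly the second fundamental form, tracing it gives $\tau(\varphi)=(n-1)\mathbf{H}=(n-1)H\eta$; this reduces $\tau_2(\varphi)=-\Delta\tau(\varphi)-\trace R^N(d\varphi,\tau(\varphi))d\varphi$ to a computation involving only the scalar $H$, the shape operator $S_\eta$ and the curvature of $N^n$. Throughout I would fix a point $p\in M$ and work in a local orthonormal frame $\{e_i\}_{i=1}^{n-1}$ that is geodesic at $p$, so that $\nabla^M_{e_i}e_i=0$ at $p$ and the rough Laplacian there reduces to $\Delta=-\sum_i\nabla^\varphi_{e_i}\nabla^\varphi_{e_i}$.

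The core of the argument is the computation of $\Delta(H\eta)$. Because the immersion has codimension one, the normal connection is trivial and the Weingarten formula reads $\nabla^\varphi_X\eta=-S_\eta(X)$, a tangential vector; hence $\nabla^\varphi_{e_i}(H\eta)=(e_iH)\eta-HS_\eta(e_i)$. Differentiating once more, applying the Gauss formula $\nabla^\varphi_X Z=\nabla^M_X Z+\langle S_\eta(X),Z\rangle\eta$ to the tangential field $Z=S_\eta(e_i)$, and summing over $i$, I would arrive at
\begin{equation}
\Delta(H\eta)=\bigl(\Delta H+H\lvert S_\eta\rvert^2\bigr)\eta+2S_\eta(\grad H)+H\sum_i\nabla^M_{e_i}\bigl(S_\eta(e_i)\bigr)\, .\nonumber
\end{equation}
The tangential divergence term is then disposed of through the Codazzi equation, which for the curvature convention of \eqref{bitensionfield} reads $(\nabla_X S_\eta)Y-(\nabla_Y S_\eta)X=-R^N(X,Y)\eta$; tracing it and using $\trace S_\eta=(n-1)H$ yields $\sum_i\nabla^M_{e_i}(S_\eta(e_i))=(n-1)\grad H-\Ricci(\eta)^{\top}$.

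It remains to expand the curvature term. Writing $\trace R^N(d\varphi,\tau(\varphi))d\varphi=(n-1)H\sum_i R^N(e_i,\eta)e_i$ and decomposing $\sum_i R^N(e_i,\eta)e_i$ with the algebraic symmetries of $R^N$, its normal component equals $-\Ricci(\eta,\eta)\eta$ and its tangential component equals $-\Ricci(\eta)^{\top}$. Collecting the normal parts of $-\Delta\tau(\varphi)$ and of the curvature term and dividing by the common factor $-(n-1)$ produces $\Delta H+H\lvert S_\eta\rvert^2-H\Ricci(\eta,\eta)=0$, while collecting the tangential parts produces $2S_\eta(\grad H)+(n-1)H\grad H-2H\Ricci(\eta)^{\top}=0$, which are the two asserted equations. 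The main obstacle is purely bookkeeping: one must keep the normal and tangential pieces of the iterated covariant derivatives rigorously separate and fix every sign consistently — in particular the sign in Codazzi relative to the chosen convention for $R^N$, together with the two contraction identities $\sum_i\langle R^N(e_i,\eta)e_i,\eta\rangle=-\Ricci(\eta,\eta)$ and $\sum_i\langle R^N(e_i,\eta)e_i,e_j\rangle=-\Ricci(\eta,e_j)$ — since a single sign slip misplaces the Ricci contributions and destroys the cancellation that yields the correct factor $-2$ in front of $\Ricci(\eta)^{\top}$.
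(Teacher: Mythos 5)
Your derivation is correct, but there is nothing in the paper to compare it against: the paper states Theorem \ref{decompbitension} as a known result, explicitly summarising the decomposition ``obtained with contributions of \cite{BMO13,C84,LM08,O10}'', and supplies no proof of its own. What you have written is the standard direct computation from those references, and the key steps all check out with the paper's curvature convention $R^N(X,Y)=\nabla_X\nabla_Y-\nabla_Y\nabla_X-\nabla_{[X,Y]}$: the expansion of $\Delta(H\eta)$ via Gauss--Weingarten is right, the traced Codazzi identity $\sum_i\nabla^M_{e_i}\bigl(S_\eta(e_i)\bigr)=(n-1)\grad H-\Ricci(\eta)^{\top}$ carries the correct sign, and the two contraction identities for $\sum_i R^N(e_i,\eta)e_i$ are consistent with the convention that makes $\Ricci(\eta,\eta)=(n-1)\rho$ on $N^n(\rho)$. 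The factor $-2H\Ricci(\eta)^{\top}$ does indeed arise from the two equal Ricci contributions, one from the Codazzi term inside $-\Delta\tau(\varphi)$ and one from the tangential part of the curvature term, so your bookkeeping is sound. The only caveat worth recording is that the statement (and your proof) implicitly uses the sign convention $\Delta H=-\sum_i e_i e_i H$ for the rough Laplacian acting on functions, matching the paper's definition of $\Delta$ on sections of $\varphi^{-1}(TN)$; with the analyst's sign the normal equation would flip sign in the first term.
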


\begin{rem}
Let $\gamma: I\rightarrow N$ be an arc-length parametrized curve from an open interval $I\subset \mathbb{R}$ to a Riemannian manifold $N$. In this case, putting $T=\gamma '$, the bienergy functional, \eqref{biharmonicenergy}, reduces to
\begin{equation}
E_2(\gamma)=\int_\gamma \kappa^2\, . \label{bienergycurves}
\end{equation}
Then, the Euler-Lagrange equation for the bienergy when it acts on the space of all maps between $I$ and $N$, can be written as 
\begin{equation}
\tau_2(\gamma)=\nabla_T^3 T+R(\nabla_T T, T)T=0\, ,\label{bitensionfieldcurves}
\end{equation}
and its solutions are called \emph{\it biharmonic curves}. Take into account that \emph{harmonic curves}, that is, arc-length parametrized solutions of the tension field $\tau(\gamma)$, \eqref{tensionfield}, are just \emph{geodesics}. Moreover, as mentioned before, harmonic maps are also biharmonic maps and, therefore, biharmonic curves represent a generalization of geodesics.

One can also study the bienergy functional $E_2$, \eqref{bienergycurves}, acting on the space of curves immersed in $N$. That is, in this case, the problem consists of seeking critical curves among arc-length parametrized curves and it is usually referred as \emph{bending energy problem}, while its critical curves are called \emph{elastic curves}.

We point out that the Euler-Lagrange equation for this last variational problem over curves is
\begin{equation}
\nabla_T^3 T+\frac{3}{2}\nabla_T \left(\kappa^2 T\right)-R(\nabla_T T, T)T=0 \, ,\nonumber
\end{equation}
which, in principle, is different from \eqref{bitensionfieldcurves}. However, geodesics are also elastic curves, which means that this is another way of generalizing the notion of a geodesic.
\end{rem}

\subsection{Biconservative Immersions}
As described by Hilbert in \cite{Hilbert}, the \emph{stress-energy} tensor associated with a variational problem is a symmetric 2-covariant tensor $\mathcal{S}$ which is conservative at critical points, that is, with $\dive\mathcal{S}=0$.

In the context of harmonic maps $\varphi:(M,g)\to (N,h)$ between two Riemannian manifolds the stress-energy tensor was studied in detail by Baird and Eells in \cite{Baird-Eells} (see also \cite{Baird-Ratto} and \cite{Sanini}). Indeed, the tensor 
\begin{equation}
\mathcal{S}=\frac{1}{2}\lvert d\varphi\rvert^2g-\varphi^* h\, \nonumber
\end{equation}
satisfies $\dive\mathcal{S}=-\langle \tau(\varphi), d\varphi\rangle$, where $\tau(\varphi)$ is given by \eqref{tensionfield}. Therefore, we have that $\dive\mathcal{S}=0$ when the map is harmonic.
Moreover, when $\varphi$ is any isometric immersion, the condition $\dive\mathcal{S}=0$ is always satisfied, since the tension field $\tau(\varphi)$ is normal to the submanifold.

The study of the stress-energy tensor for the bienergy \eqref{biharmonicenergy} was initiated in \cite{Jiang} and afterwards developed in \cite{Loubeau-Montaldo-Oniciuc}. Its expression is
\begin{equation}
\mathcal{S}_2 (X,Y)=\frac{1}{2}\lvert \tau(\varphi)\rvert^2\langle X, Y\rangle+\langle d\varphi, \nabla \tau(\varphi)\rangle\langle X,Y\rangle- \langle d\varphi(X), \nabla_Y \tau(\varphi)\rangle-\langle d\varphi(Y),\nabla_X\tau(\varphi)\rangle, \nonumber
\end{equation}
and it satisfies the condition
\begin{equation}
\dive\mathcal{S}_2=-\langle \tau_2(\varphi), d\varphi\rangle,\label{div2}
\end{equation}
where $\tau_2(\varphi)$ is the bitension field given in \eqref{bitensionfield}. Due to \eqref{div2}, we have that $\mathcal{S}_2$ is conforming to the principle of a stress-energy tensor for the bienergy.

Now, if $\varphi$ is an isometric immersion, \eqref{div2} reads
\begin{equation}
\left(\dive\mathcal{S}_2\right) ^{\sharp}=-\tau_2(\varphi)^{T}, \label{tangentpart}
\end{equation}
where $\sharp$ denotes the musical isomorphism sharp.

An isometric immersion is \emph{biconservative} if the corresponding stress-energy tensor $\mathcal{S}_2$ is conservative, that is, if $\dive\mathcal{S}_2=0$. From \eqref{tangentpart}, biconservative isometric immersions correspond to immersions with vanishing tangential part of the corresponding bitension field, that is, using Theorem \ref{decompbitension},  an isometric immersion $\varphi: M^{n-1}\to N^n$ is \emph{biconservative} if and only if $\varphi$ satisfies the condition
\begin{equation}
2S_\eta(\grad H)+ (n-1) H\, \grad H-2 H\Ricci (\eta)^T=0\, . \label{biconserveq}
\end{equation}
An hypersurface $M^{n-1}$ immersed in this way is usually called a \emph{biconservative hypersurface}.

Notice that, in the particular case when the ambient space is any space form of dimension $n$ and constant sectional curvature $\rho$, $N^n(\rho)$, the tangential part of the Ricci curvature vanishes, and therefore equation \eqref{biconserveq} simplifies to
\begin{equation}
2S_\eta(\grad H)+ (n-1) H\grad H=0\, .\nonumber
\end{equation}
The theory of biconservative hypersurfaces is developing very rapidly and we refer the reader to the papers \cite{FOP,YuFu,MOR,MOR2,Montaldo-Onnis-Passamani} and the references therein. 

\subsection{Invariant Surfaces}
We end this section recalling that  a surface $S$ into  $N^3(\rho)$ is said to be an \emph{invariant surface} if it stays invariant under the action of a one-parameter group of isometries of $N^3(\rho)$. The one-parameter group of isometries of $N^3(\rho)$ is determined by the flow of a Killing vector field of $N^3(\rho)$. Take $\Phi\in {\rm Isom}^+\left(N^3(\rho)\right)$ an orientation preserving isometry of $N^3(\rho)$ and assume that $\Phi$ is a rotation whose axis is a given geodesic $\zeta$. The group of all isometries in ${\rm Isom}^+\left( N^3(\rho)\right) $ with the same axis is isomorphic to $SO(2)\backsimeq \mathbb{S}^1$ and acts naturally on $N^3(\rho)$. A \emph{rotational surface}, $S\subset N^3(\rho)$, is an $SO(2)$-invariant surface, where $SO(2)$ is considered to be the subgroup of isometries, ${\rm Isom}^+\left(N^3(\rho)\right)$, acting as explained before. The group $SO(2)$ fixes all the points of the rotation axis $\zeta$ and rotates an everywhere orthogonal curve $\gamma$ (the \emph{profile curve}) around $\zeta$ sweeping out a rotational surface which will be denoted by $S_\gamma$ from now on.

\section{Bending-Type Curvature Energy}

Let us denote by $\gamma$ an arc-length parametrized curve immersed in $N^3(\rho)$. If $\gamma(s)$ is a unit speed non-geodesic smooth curve immersed
in $N^3(\rho)$, then $\gamma(s)$ is a \textit{Frenet curve} of
rank $2$ or $3$  and the  standard \textit{Frenet frame} along
$\gamma(s)$ is given by $\{T,N, B\}(s)$, where $N$ and $B$ are the
\emph{unit normal} and \emph{unit binormal} to the curve,
respectively, and $B$ is chosen so that ${\rm det}(T,N,B)=1$. Then the
\emph{Frenet equations}
\begin{eqnarray}
\nabla_T T(s)&=&\kappa(s) N (s)\, , \label{frenet1}\\
\nabla_T N(s)&=&-\kappa(s) T(s) +\tau(s) B(s)\, , \label{frenet2}\\
\nabla_T B(s)&=& -\tau (s)N (s)\, ,\label{frenet3}
\end{eqnarray}
define the \textit{curvature}, $\kappa(s)$ (we will always consider $\kappa(s)\geq 0$), and \textit{torsion}, $\tau(s)$, along $\gamma(s)$ (do not confuse the notation with the tension field $\tau(\varphi)$ defined in \eqref{tensionfield}).

In a Riemannian 3-space form any local geo\-me\-trical scalar
defined along Frenet curves can always be expressed as a function
of their curvatures and derivatives. Notice that, even if the rank
of $\gamma$ is $2$ (i.e, $\tau =0$), the binormal
$B=T\times N$ is still well defined and above
formulas \eqref{frenet1}-\eqref{frenet3} still make sense. Moreover, in a 3-space form, $N^3(\rho)$, a curve verifying $\tau=0$ can be assumed to lie in a totally geodesic surface $N^2(\rho)$. Curves whose torsion vanishes are called \emph{planar curves}. From now on, we are going to deal with planar curves, unless the opposite is said.

Let us consider the following \emph{curvature energy functional} \eqref{curveture-energy}
\begin{equation}
\mathbf{\Theta}(\gamma )=\int_{\gamma} \kappa^{1/4}=
\int_{0}^L  \kappa^{1/4}(s)\, ds \, ,\nonumber
\end{equation}
where, as usual, the arc-length or natural parameter is
represented by $s\in \left[0,L\right]$, $L$ being the length of
$\gamma$. Then, we consider $\mathbf{\Theta}$
acting on the following spaces of curves, satisfying given
boundary conditions in $(N^2(\rho),\langle\cdot,\cdot\rangle)$. We shall denote by $\Omega^{\rho}
_{p_op_1}$ the space of smooth immersed curves of $N^2(\rho)$,
joining two given points of it,
that is:
\begin{equation}
\Omega^{\rho}_{p_op_1}=\{\delta:[0,1]\rightarrow N^2(\rho)\, ;\, \delta (i)=p_i, i\in\{0,1\}, \frac{d\delta}{dt}(t)\neq 0,
\forall t\in [0,1]  \}, \label{omega}\nonumber
\end{equation}
\noindent where $p_i\in N^2(\rho), i\in\{0,1\}$, are arbitrary given points of $N^2(\rho)$.

For a curve $\gamma:[0,1]\rightarrow N^2(\rho)$, we
take a variation of $\gamma$,
$\Gamma=\Gamma(t,\bar{t}):[0,1]\times
(-\varepsilon,\varepsilon)\rightarrow N^2(\rho)$ with
$\Gamma(t,0)=\gamma(t)$. Associated to this variation we have the
vector field $W=W(t)=\frac{\partial\Gamma}{\partial \bar{t}}(t,0)$
along the curve $\gamma(t)$. We also write
$V=V(t,\bar{t})=\frac{\partial\Gamma}{\partial t}(t,\bar{t})$,
$W=W(t,\bar{t})$, $v=v(t,\bar{t})=\vert V(t,\bar{t})\vert$,
$T=T(t,\bar{t})$, $N=N(t,\bar{t})$, $B=B(t,\bar{t})$, etc., with
the obvious meanings and put $V(s,\bar{t})$, $W(s,\bar{t})$ etc.,
for the corresponding reparametrizations by arc-length. Then, the following general formulas for the variations of $v$ and $\kappa$ in $\gamma$, in the direction of the variation vector field $W$ can be obtained using
standard computations that involve the Frenet equations
\eqref{frenet1}-\eqref{frenet3} (see, \cite{Arroyo-Garay-Pampano}, \cite{Langer-Singer} and references therein)
\begin{eqnarray}
W(v) & = & v\langle\nabla_T W,T\rangle, \label{v1}\\
W(\kappa) & = & \langle\nabla^2_T W,N\rangle-2\kappa\langle\nabla_T W,T\rangle +\rho\langle W,N\rangle. \label{v2} 
\end{eqnarray}
Next, after a standard
computation involving integration by parts and formulae
\eqref{v1} and \eqref{v2}, the \emph{First Variation Formula} is obtained:
\begin{equation}
\frac{d}{d\nu}\mathbf{\Theta}(\nu)_{\mid _{\nu=o}}=\
 \int_{0}^{L}\langle\mathcal{E}(\gamma),W\rangle ds +\mathcal{B}
 \left[ W,\gamma \right] _{0}^{L}.  \label{1fv} \nonumber
\end{equation}
Here, $\mathcal{E}(\gamma)$ and  $\mathcal{B}\left[ W,\gamma \right]
_{0}^{L}$ denote the \emph{Euler-Lagrange operator} and \emph{boundary term},
respectively. These are given by
\begin{eqnarray}\label{elo}
&&\mathcal{E}(\gamma)  =\nabla_T \mathcal{J}-R(\mathcal{K},T)T=\nabla_T \mathcal{J}+\rho\,\mathcal{K}\, ,\quad\nonumber  \\ 
&&\mathcal{B}\left[ W,\gamma \right] _{0}^{L} = \left[\langle\mathcal{K},\nabla_T W\rangle - \langle \mathcal{J},W\rangle\right] _{0}^{L},\nonumber
\end{eqnarray}
where
\begin{eqnarray}
\mathcal{K}(\gamma) &=&\frac{1}{4\kappa^{3/4}}\, N\,,  \label{kfield} \\
\mathcal{J}(\gamma)&
=&\nabla_T \mathcal{K}-\frac{1}{2}\kappa^{1/4}\,T\,
. \label{ejk}
\end{eqnarray}

We will call \emph{critical curve} or \emph{extremal curve} to any curve $\gamma\subset\Omega^{\rho}_{p_op_1}$ such that $\mathcal{E}(\gamma)=0$. Notice that this is an abuse of notation, since proper criticality depends on the boundary conditions, as it is clear from the First Variation Formula. However, under suitable boundary conditions, curves verifying $\mathcal{E}(\gamma)=0$ are going to be proper critical curves. Therefore, since for our purposes we just need to consider curves satisfying $\mathcal{E}(\gamma)=0$, for the sake of simplicity, from now on, we are going to use the name critical curve (or, extremal curve) to denote any curve $\gamma\subset\Omega_{p_o p_1}^\rho$ verifying  $\mathcal{E}(\gamma)=0$. 

Now, using the Frenet equations \eqref{frenet1}-\eqref{frenet3}, we can
see that $\mathcal{E}(\gamma)$ has no component in $T$, nor in $B$, while its
normal component can be expressed in terms of the
curvature of $\gamma$. Thus, after long straightforward
computations, $\mathcal{E}(\gamma)=0$ reduces to
\begin{eqnarray}
\kappa^{3/4}\frac{d^2}{ds^2}\left(\frac{1}{\kappa^{3/4}}\right)-3\kappa^2+\rho&=&0\, , \label{Euler-Lagrange}
\end{eqnarray}
which is the \emph{Euler-Lagrange equation} for the curvature
energy functional $\mathbf{\Theta}$, \eqref{curveture-energy}, acting on $\Omega^{\rho}_{p_op_1}$ and agrees with formula (30) of \cite{Caddeo-Montaldo-Oniciuc-Piu}.

Non-geodesic critical curves with constant curvature are given by the only planar curves (up to isometries) whose curvature verifies
\begin{equation}
\kappa^2=\kappa_o^2=\frac{\rho}{3}\, , \label{constantcurvature}
\end{equation}
which is only possible in the case of the round 2-sphere, that is, if $N^2(\rho)=\mathbb{S}^2(\rho)$.

On the other hand, for critical curves with non-constant curvature, let us now define the following vector field $\mathcal{I}$ along
$\gamma$
\begin{equation}
\mathcal{I}=T\times\mathcal{K}\, , \nonumber
\end{equation}
where $\times$ denotes the cross product and $\mathcal{K}$ is
defined in \eqref{kfield}. Combining the Frenet equations
\eqref{frenet1}-\eqref{frenet3} and \eqref{kfield}, we see that $\mathcal{I}$ is
given by
\begin{equation}
\mathcal{I}=T\times\mathcal{K}=\frac{1}{4\kappa^{3/4}}\,
B\, .\label{I}
\end{equation}
Then, a direct long computation using the Frenet equations \eqref{frenet1}-\eqref{frenet3}, formulas \eqref{kfield} and \eqref{ejk}, and the Euler-Lagrange
equation, \eqref{Euler-Lagrange},
shows that the derivative of the function $\langle
\mathcal{J},\mathcal{J}\rangle+\rho\,
\langle\mathcal{I},\mathcal{I}\rangle$ along the critical curve
is zero. Thus,
\begin{eqnarray}
\langle\mathcal{J},\mathcal{J}\rangle+\rho\, \langle\mathcal{I},\mathcal{I}\rangle&=&d,\label{integral}
\end{eqnarray}
with $d$ a real constant, represents a first integral of \eqref{Euler-Lagrange}. Notice that substituting the values of $\mathcal{J}$, \eqref{ejk}, and $\mathcal{I}$, \eqref{I}, in above formula we get
\begin{equation}
\kappa_s^2=\frac{16}{9}\kappa^2\left(16\, d \,\kappa^{3/2}-9\,\kappa^2-\rho\right)\, .\label{firstintegral}
\end{equation}

We point out that, in principle, the constant $d$ may be arbitrary. However, as we will see later, for our purposes it will be restricted to be positive.

Finally, to end this section, we are going to see that critical curves for $\mathbf{\Theta}$, \eqref{curveture-energy}, have a distinguished vector field along them. A vector field $W$ along $\gamma$, which infinitesimally
preserves unit speed parametrization, is said to be a \emph{Killing
vector field along $\gamma$} (in the sense of \cite{Langer-Singer})
if $\gamma$ evolves in the direction of $W$ without changing
shape, only position. In other words,  the following equations
must hold
\begin{equation}
W(v)(s,0)=W(\kappa)(s,0)=0\, , \nonumber
\end{equation}
($v=\lvert \gamma'\rvert=\lvert \frac{d\gamma}{ds}\rvert$ being
the speed of $\gamma$) for any variation of $\gamma$ having $W$ as
variation field.

It turns out that extremals of
$\mathbf{\Theta}$, \eqref{curveture-energy}, have a naturally associated Killing vector field defined along
them, as we summarise in the following proposition (for a proof, see \cite{Arroyo-Garay-Pampano} and \cite{Langer-Singer})

\begin{prop}\label{carcenter}
Assume that $\gamma$ is an immersed curve in $N^2(\rho)$ which is an extremal of
$\mathbf{\Theta}$, \eqref{curveture-energy}.
Consider the vector field \eqref{I}
\begin{equation}
\mathcal{I}=\frac{1}{4\kappa^{3/4}}\,B \, ,\nonumber
\end{equation}
defined on $\gamma$, $B$ being its Frenet binormal vector field. Then  $\mathcal{I}$ is a Killing vector field along $\gamma$.
\end{prop}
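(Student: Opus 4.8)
The goal is to show that the vector field $\mathcal{I} = \frac{1}{4\kappa^{3/4}} B$ is a Killing field along the critical curve $\gamma$. Recall that $W$ is Killing along $\gamma$ precisely when $W(v) = 0$ and $W(\kappa) = 0$ pointwise along $\gamma$. So my plan is to compute these two variations using the general formulas \eqref{v1} and \eqref{v2}, with $W$ replaced by the specific field $\mathcal{I}$, and verify that both vanish when $\gamma$ satisfies the Euler-Lagrange equation \eqref{Euler-Lagrange}.

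First I would handle $W(v) = v\langle \nabla_T \mathcal{I}, T\rangle$ from \eqref{v1}. Using the Frenet equation \eqref{frenet3}, $\nabla_T B = -\tau N$, together with the product rule applied to $\mathcal{I} = \frac{1}{4\kappa^{3/4}} B$, I get that $\nabla_T \mathcal{I}$ is a combination of $B$ (from differentiating the scalar coefficient) and $N$ (from $\nabla_T B$). Since both $N$ and $B$ are orthogonal to $T$, the inner product $\langle \nabla_T \mathcal{I}, T\rangle$ vanishes immediately, so $W(v) = 0$ holds without even invoking criticality. (In the planar case $\tau = 0$ this is even cleaner, as $\nabla_T \mathcal{I}$ is purely in the $B$ direction.)

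The substantive computation is $\mathcal{I}(\kappa) = 0$. Here I would apply \eqref{v2}, namely $W(\kappa) = \langle \nabla_T^2 W, N\rangle - 2\kappa\langle \nabla_T W, T\rangle + \rho\langle W, N\rangle$, with $W = \mathcal{I}$. The middle term drops out by the previous paragraph, and since $\mathcal{I}$ is proportional to $B$ which is orthogonal to $N$, the term $\rho\langle \mathcal{I}, N\rangle$ also vanishes in the planar setting. Thus everything reduces to showing $\langle \nabla_T^2 \mathcal{I}, N\rangle = 0$. I would compute $\nabla_T^2 \mathcal{I}$ by differentiating $\nabla_T \mathcal{I} = \frac{d}{ds}\!\left(\frac{1}{4\kappa^{3/4}}\right) B + \frac{1}{4\kappa^{3/4}}(-\tau N)$ once more via the Frenet equations, extract the $N$-component, and then show it is proportional to the left-hand side of the Euler-Lagrange equation \eqref{Euler-Lagrange}. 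Concretely, in the planar case the $N$-component of $\nabla_T^2 \mathcal{I}$ should come out to be a multiple of $\kappa^{3/4}\frac{d^2}{ds^2}\!\left(\kappa^{-3/4}\right) - 3\kappa^2 + \rho$ (up to lower-order terms), which vanishes exactly by \eqref{Euler-Lagrange}.

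The main obstacle I anticipate is purely computational bookkeeping: carefully tracking the $N$-component through two covariant derivatives of $\mathcal{I}$ and confirming that the resulting expression collapses precisely to the Euler-Lagrange operator rather than to some extraneous multiple or to an expression needing the first integral \eqref{firstintegral}. An alternative, cleaner route would be to appeal to the general structure theory of such energies: for any curvature functional of the form $\int f(\kappa)$, the binormal-direction field built from the $\mathcal{K}$, $\mathcal{J}$ data is automatically Killing along extremals, which is the content of the cited references \cite{Arroyo-Garay-Pampano} and \cite{Langer-Singer}. I would therefore structure the argument to reduce to verifying $\langle \nabla_T^2 \mathcal{I}, N\rangle = 0$ directly and then invoke \eqref{Euler-Lagrange}, keeping the explicit Frenet computation as brief as possible.
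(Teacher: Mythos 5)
Your overall strategy (verify the Killing conditions directly from the variation formulas \eqref{v1}--\eqref{v2}) is the one used in the references the paper defers to, but your execution misses where the content of the proposition actually lies. For a planar curve ($\tau=0$) the Frenet binormal is parallel, $\nabla_T B=0$, so writing $\mathcal{I}=gB$ with $g=\tfrac14\kappa^{-3/4}$ you get $\nabla_T\mathcal{I}=g'B$ and $\nabla_T^2\mathcal{I}=g''B$: both stay in the $B$-direction. Hence \emph{every} term in $W(v)$ and $W(\kappa)$ vanishes by orthogonality alone -- in particular $\langle\nabla_T^2\mathcal{I},N\rangle$ is identically zero, not ``a multiple of the Euler--Lagrange operator'' as you anticipate. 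Your argument therefore never invokes criticality and would ``prove'' that $\tfrac{1}{4\kappa^{3/4}}B$ is Killing along an arbitrary planar curve, which is false and should have been a warning sign.

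The gap is that you took the paper's abbreviated two-condition definition at face value. In the sense of Langer--Singer (which the proposition explicitly invokes, and which is what is needed afterwards to extend $\mathcal{I}$ to a Killing field of $N^3(\rho)$ and sweep out $S_\gamma$ by congruent copies of $\gamma$), a Killing field along a curve in a $3$-space form must also satisfy $W(\tau)=0$, i.e.\ the flow must preserve planarity. This is the only nontrivial condition here: with the Langer--Singer formula for $W(\tau)$ one finds, for $W=gB$ and $\tau=0$,
\begin{equation}
W(\tau)=\left[\frac{1}{\kappa}\bigl(g''+\rho\,g\bigr)\right]_s+\kappa\,g'\,,\nonumber
\end{equation}
and substituting $g=\tfrac14\kappa^{-3/4}$ together with the Euler--Lagrange equation \eqref{Euler-Lagrange}, which gives $g''=g(3\kappa^2-\rho)$, yields $\left[3\kappa g\right]_s+\kappa g'=\tfrac{3}{16}\kappa^{-3/4}\kappa_s-\tfrac{3}{16}\kappa^{-3/4}\kappa_s=0$. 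So criticality enters precisely, and only, through $W(\tau)=0$; a correct proof must include this computation. (Note the paper itself gives no proof of this proposition, citing \cite{Arroyo-Garay-Pampano} and \cite{Langer-Singer} instead, so the comparison here is against what those sources actually establish.)
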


\section{Characterisation of Profile Curves as Bending-Type Energy Extremals}

Throughout this section we are going to assume that $S$ is a non-CMC biconservative surface of a Riemannian 3-space form, $N^3(\rho)$. Then, as mentioned in \S 2, $S$ is locally rotational. We denote by $\gamma$ the curve everywhere orthogonal to the rotation, then $S$ can be locally parametrized as
\begin{equation}
x(s,t)=\phi_t(\gamma(s)), \label{par}
\end{equation}
where $\phi_t$ denotes the one-parameter group of rotations. Usually, $\gamma$ is called the \emph{profile curve} of $S_\gamma\subset S$. 

Profile curves of rotational surfaces are planar and, furthermore in this case, they have a nice geometric property, as stated in Theorem \ref{variationalcharacterization}. We prove this theorem in the following subsection.

\subsection{Proof of Theorem \ref{variationalcharacterization}}
Let $ S \subset N^3(\rho)$ be an isometrically
immersed  non-CMC biconservative surface in any Riemannian $3$-space form
$N^3(\rho)$ with local orientation determined by the normal
vector $\eta$. Then, by Proposition \ref{LW}, it is a rotational surface verifying the relation \eqref{LWrelationbiconservative} between its principal curvatures. We will denote by $\xi$ the Killing vector field which is the infinitesimal generator of the rotation that leaves $S$ invariant. Then, locally on
$S$, we can choose Fermi geodesic coordinates $(U,x)$,
$x:U\rightarrow S,\, x(s,t)$, so that
$\xi=\frac{\partial}{\partial t}$ and $s$ measures the arc-length
along geodesics orthogonal to $\xi$. Thus, calling
$\gamma(s):=x(s,0)$, we have  that $x(U):=S_\gamma\subset S$ is
parametrized  by \eqref{par} where $\phi_t\in \mathcal{G}_\xi$, the one-parameter group of isometries generated by $\xi$. Observe that $\gamma(s)$ and
all its copies by the action of $\mathcal{G}_\xi$,
$\gamma_t(s):=\phi_t(\gamma(s)), t\in \mathbb{R}$,  are arc-length
parametrized geodesics of $S_\gamma$ which are orthogonal to
$\xi$, so that $S_\gamma$  is foliated by geodesics having
$\kappa(s,t)$ as curvature in
$N^3(\rho)$. Furthermore, they all have vanishing torsion. If $\gamma_t(s)$ were also geodesics in
$N^3(\rho)$, $\forall t$, then $S_\gamma$ would be foliated by
geodesics of the ambient space what would make it a ruled surface. In this case, we have that from relation \eqref{LWrelationbiconservative}, $S_\gamma$ is minimal, since $\kappa_1=-\kappa(s)=0$, which is not possible.
Hence, we assume that the orthogonal curves to the Killing field
$\xi$, $\gamma_t(s)$, are not geodesics of the ambient space.
Then, $\gamma_t(s)$ are Frenet curves and defined over them we
have a Frenet frame $\{T(s,t),N(s,t),B(s,t)\}$ satisfying \eqref{frenet1}-\eqref{frenet3}.\\
At this point, after long straightforward computations, one can see  that the
Gauss and Weingarten formulae and the
simplicity of the curvature tensor in $N^3(\rho)$, lead  to a PDE system to be satisfied (see, for instance, \cite{Arroyo-Garay-Pampano}). The compatibility conditions for this
system are given by  the Gauss-Codazzi equations, which in our case, since $\phi_t$ are isometries,
can be shown to boil down just to
\begin{eqnarray}
\,0 & =&
\left(\frac{1}{\kappa}\left(G_{ss}+G(\kappa^2+\rho)\right)\right)_{s}-\kappa_s G\, , \label{Gauss-Codazzi}
\end{eqnarray}
where G is the length of the Killing vector field $\xi$, that is, $G^2(s)=\langle x_t,x_t\rangle$. Moreover, not only $G(s)$, but also all the involved functions depend only on $s$. Now, $\kappa_1(s)=-\kappa(s)$ and $\kappa_2(s)=h_{22}(s)$, the second coefficient of the second fundamental form given by (for details, see \cite{Arroyo-Garay-Pampano})
\begin{equation}
h_{22}=\frac{1}{\kappa}\left(\frac{G_{ss}}{G}+\rho\right), \nonumber
\end{equation}
and, therefore, the relation \eqref{LWrelationbiconservative} becomes
\begin{eqnarray}
G_{ss}=G\left(3\kappa^2-\rho\right).\label{three}
\end{eqnarray}
Let us assume first that $\gamma$ has constant curvature
$\kappa(s)=\kappa_o$ in $N^3(\rho)$. We combine (\ref{Gauss-Codazzi}) and
(\ref{three}) to obtain that $G(s)$ must be a positive constant and, therefore, $S_\gamma$ should be a flat isoparametric surface which contradicts the fact that $S$ has non-CMC. Consequently, it is out of our consideration. Even though, in this case, equation \eqref{three} implies that 
\begin{equation}
3\kappa_o^2=\rho\, , \nonumber
\end{equation}
that is, $\gamma$ is also a critical curve with constant curvature for $\mathbf{\Theta}$, \eqref{curveture-energy}, (see formula \eqref{constantcurvature}).

\noindent Finally, suppose that $\kappa$ is not constant. Locally, by the
Inverse Function Theorem we can suppose that $s$ is a function of
$\kappa$ and calling $G(\kappa)=\dot{P}(\kappa)$, where the
upper dot denotes derivative with respect to $\kappa$, we have
that \eqref{Gauss-Codazzi} and \eqref{three} can be expressed
in the following way
\begin{eqnarray}
\dot{P}_{ss}+\dot{P}\left(\kappa^2+\rho\right)-\kappa \left(P+\lambda\right)&=&0\, , \label{1}\\
\dot{P}_{ss}-\dot{P}\left(3\kappa^2-\rho\right)&=&0\, , \label{2}
\end{eqnarray}
for some $\lambda\in \mathbb{R}$. Now, equation \eqref{1} is the Euler-Lagrange equation for $\int_{\gamma} (P(\kappa)+\lambda)ds$ in $N^3(\rho)$ (see for instance, \cite{Arroyo-Garay-Pampano}). Moreover, substituting it in equation \eqref{2} we get an ODE in $P$ which can be solved obtaining 
\begin{equation}
P(\kappa)=\kappa^{1/4}-\lambda\, . \nonumber
\end{equation}
Thus, $\gamma$ must be a critical curve for $\mathbf{\Theta}$, \eqref{curveture-energy}, proving the result. \hfill $\square$
\\

In fact, as mentioned in the introduction, the converse of Theorem \ref{variationalcharacterization} is also true and gives us a way of constructing all non-CMC biconservative surfaces of 3-space forms after binormal evolution of extremal curves, as we will explain in what follows.

From Proposition \ref{carcenter}, we know that the vector field along $\gamma$, $\mathcal{I}$, \eqref{I}, is a Killing vector field along the curve. Therefore, using an argument similar to that of \cite{Langer-Singer} we can extend $\mathcal{I}$  to a Killing
vector field on the whole $N^3(\rho)$. Let us denote it by $\mathcal{I}$
again. Since $N^3(\rho)$ is complete, we can consider the
one-parameter group of isometries determined by the flow of
$\mathcal{I}$, $\{\phi_t\,;\,t\in \mathbb{R}\}$, and define the
surface $S_\gamma:=\{\phi_t(\gamma(s))\}$ obtained as the
evolution of $\gamma$ under the $\mathcal{I}$-flow. Observe that
$S_\gamma$ is an $\mathcal{I}$-invariant surface, which is foliated by congruent
copies of $\gamma$, $\gamma_t(s):=\phi_t(\gamma(s))$.

Moreover, since $\phi_t$ are isometries of $N^3(\rho)$, we have
\begin{equation}
x_t(s,t)= \frac{1}{4\kappa^{3/4}}\, B(s,t)\, ,\nonumber
\end{equation}
$\kappa(s)$ being the curvature of $\gamma(s)$, and $B(s,t)$ the
unit Frenet binormals of $\gamma_t(s)$. Thus, $S_\gamma$ obtained
as the flow evolution of $\gamma$, $x(s,t)=\phi_t(\gamma(s))$, is
a \emph{binormal evolution surface} with velocity
$V(s):=\langle x_t,x_t\rangle^{\frac{1}{2}}
=\langle\mathcal{I},\mathcal{I}\rangle^\frac{1}{2}$
(for more details see, \cite{Arroyo-Garay-Pampano}). 

Now, if $S_\gamma$ denotes a binormal evolution surface all whose filaments satisfy $\tau=0$, then, as proved in \cite{Arroyo-Garay-Pampano}, the fibers of $S_\gamma$ have constant curvature and zero torsion (if they are not geodesics) in $N^3(\rho)$. In particular, if the curvature of the filaments, $\kappa(s,t)$, is also constant, then $S_\gamma$ is a flat isoparametric surface. 

For the case where the filaments have non-constant curvature, the following proposition was proved in \cite{Arroyo-Garay-Pampano}.

\begin{prop}\label{rot} Let $S_\gamma\subset N^3(\rho)$ be a binormal evolution surface all whose filaments have zero torsion. Then, if they also have non-constant curvature, $S_\gamma$ is a rotational surface.
\end{prop}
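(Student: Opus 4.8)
The plan is to view the statement as a question about the \emph{type} of the one-parameter group of isometries sweeping out $S_\gamma$, and then to eliminate every type except rotations about a geodesic. By construction $S_\gamma=\{\phi_t(\gamma(s))\}$ is invariant under the flow $\{\phi_t\}$ of the ambient extension of the along-curve Killing field $\mathcal{I}$ of Proposition~\ref{carcenter}; in the induced parametrization the filaments are the $s$-curves, the orbits are the $t$-curves, and the two families are orthogonal since $x_t=\mathcal{I}=\tfrac{1}{4\kappa^{3/4}}B$ by \eqref{I} while $x_s=T$. Thus proving that $S_\gamma$ is rotational is the same as proving that $\{\phi_t\}$ is a group of rotations about a geodesic axis of $N^3(\rho)$, with $\gamma$ an orthogonal (profile) curve.

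First I would recall the classification, up to conjugation, of one-parameter subgroups of ${\rm Isom}^+(N^3(\rho))$. Apart from rotations about a geodesic, the remaining subgroups superpose the rotation with a commuting motion: a second commuting rotation when $\rho>0$ (a double rotation, reducing to a Clifford translation for equal speeds), a translation or screw motion when $\rho=0$, and a translation, screw, or parabolic motion when $\rho<0$. A rotation about a geodesic is singled out by the fact that its Killing field vanishes exactly along that geodesic, so that $f:=\tfrac12\langle\mathcal{I},\mathcal{I}\rangle$ attains the value $0$; equivalently $\mathcal{I}$ is of elliptic algebraic type. The two invariants I would read off are the norm $\langle\mathcal{I},\mathcal{I}\rangle=\tfrac{1}{16\kappa^{3/2}}$ along $\gamma$ and the first integral $\langle\mathcal{J},\mathcal{J}\rangle+\rho\langle\mathcal{I},\mathcal{I}\rangle=d$ of \eqref{integral}.

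Since $\kappa$ is non-constant, the norm $\langle\mathcal{I},\mathcal{I}\rangle=\tfrac{1}{16\kappa^{3/2}}$ is non-constant, which excludes at once every constant-norm motion, namely the translations and the Clifford translations. The substantive step is to exclude the remaining fixed-point-free motions whose Killing field has non-constant norm (the generic double rotations, the screw motions, and the parabolic motions), that is, to show that the translational (or second-rotational) part of $\mathcal{I}$ is trivial; this is exactly where the hypothesis $\tau=0$ on the filaments enters. Each filament being planar, \eqref{frenet3} gives $\nabla_T B=0$, so $B$ is parallel along $\gamma_t$ and each filament lies in a totally geodesic surface $\Pi_t\cong N^2(\rho)$ to which $\mathcal{I}$ is orthogonal. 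I would then quantify how a nontrivial translational part of the flow twists the orthogonal trajectories, and show that it forces their torsion to be nonzero, contradicting $\tau=0$; equivalently, planarity of all the filaments compels the leaves $\Pi_t$ to share a common geodesic, which can only be the axis. The hard part will be precisely this estimate — extracting the torsion of an orthogonal trajectory from the algebraic type of $\mathcal{I}$ and proving it vanishes only in the elliptic case — and it is the technical core of the proposition.

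Finally, once $\mathcal{I}$ is known to be of pure-rotation type, I would identify the axis with its zero set. Differentiating and using that $\mathcal{I}$ is Killing gives $\nabla f=-\nabla_{\mathcal{I}}\mathcal{I}$, while the Killing--Jacobi equation together with the constant curvature of $N^3(\rho)$ controls $f$ along geodesics and exhibits its zero set as a single geodesic $\zeta$. The group $\{\phi_t\}$ is then the $SO(2)$-group of rotations about $\zeta$, and $S_\gamma$, being invariant under it with $\gamma$ orthogonal to the orbits, is a rotational surface with profile curve $\gamma$, as claimed.
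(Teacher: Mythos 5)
First, a point of reference: the paper does not prove Proposition~\ref{rot} at all --- it is quoted from \cite{Arroyo-Garay-Pampano} ("the following proposition was proved in \cite{Arroyo-Garay-Pampano}"), so there is no in-paper argument to compare yours against; your proposal has to stand on its own.

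As it stands it does not: there is a genuine gap at exactly the point that carries the whole content of the proposition. Your strategy --- classify the one-parameter subgroups of ${\rm Isom}^+(N^3(\rho))$ up to conjugation and eliminate every type except rotations about a geodesic --- is a reasonable frame, and the easy eliminations you do carry out are fine (non-constancy of $\kappa$ makes $\langle\mathcal{I},\mathcal{I}\rangle=\tfrac{1}{16\kappa^{3/2}}$ non-constant, which kills the constant-norm types). But the decisive step, ruling out screw motions, parabolic motions and generic double rotations, is precisely what you defer with ``the hard part will be this estimate \dots and it is the technical core of the proposition.'' That estimate --- showing that a nontrivial translational or second-rotational component of $\mathcal{I}$ forces nonzero torsion on the orthogonal trajectories, or equivalently that planarity of all filaments forces the totally geodesic leaves $\Pi_t$ to share a common geodesic --- is the proposition; without it you have only restated what must be proved. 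Two further points sharpen the gap. First, your first cut is slightly miscounted: in $\mathbb{H}^3$ the Killing field of a translation along a geodesic does \emph{not} have constant norm (it grows like $\cosh$ of the distance to the axis), so hyperbolic translations survive your non-constant-norm test and must also be handled in the deferred step. Second, the proposition as stated concerns an arbitrary binormal evolution surface with planar filaments, whereas you assume from the outset that the generating field is the ambient extension of the $\mathcal{I}$ of Proposition~\ref{carcenter} attached to an extremal of $\mathbf{\Theta}$; that suffices for the application in Theorem~\ref{converse} but is strictly narrower than the statement. To close the argument you would need the actual computation (as in \cite{Arroyo-Garay-Pampano}): write the Killing field in its normal form for each algebraic type, compute the torsion of the curves orthogonal to its orbits on an invariant surface, and verify it vanishes identically only in the elliptic case.
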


Thus, using these facts together with equation \eqref{LWrelationbiconservative}, we can prove the converse of Theorem \ref{variationalcharacterization}.

\begin{thm}\label{converse} Let $\gamma$ be a planar extremal curve with non-constant curvature of the energy $\mathbf{\Theta}(\gamma)=\int_{\gamma}\kappa^{1/4}$ and let $S_\gamma$ denote the $\mathcal{I}$-invariant surface in $N^3(\rho)$ obtained by evolving $\gamma$ under the flow of the Killing field $\mathcal{I}$ which extends \eqref{I} to $N^3(\rho)$. Then, $S_\gamma$ is a rotational linear Weingarten surface of $N^3(\rho)$ verifying \eqref{LWrelationbiconservative}, that is, $S_\gamma$ is a non-CMC biconservative surface.
\end{thm}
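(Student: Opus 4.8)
The plan is to run the argument of Theorem~\ref{variationalcharacterization} in reverse, using the extremality of $\gamma$ to produce, step by step, each of the geometric properties appearing in Proposition~\ref{LW}. First I would record the data attached to the planar extremal $\gamma$: since $\gamma$ is non-geodesic its curvature $\kappa$ is positive, and by Proposition~\ref{carcenter} the field $\mathcal{I}=\frac{1}{4\kappa^{3/4}}\,B$ of \eqref{I} is a Killing field along $\gamma$. Following the Killing-field extension procedure recalled in the text preceding this theorem, I would extend $\mathcal{I}$ to a genuine Killing field on all of $N^3(\rho)$ and let $\{\phi_t\}$ be its (isometric) flow. The surface $S_\gamma=\{\phi_t(\gamma(s))\}$ is then by construction a binormal evolution surface whose filaments $\gamma_t=\phi_t\circ\gamma$ are \emph{congruent} copies of $\gamma$; in particular every filament is planar with vanishing torsion and carries the same non-constant curvature function $\kappa(s)$.

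Next I would invoke Proposition~\ref{rot}: a binormal evolution surface all of whose filaments have zero torsion and non-constant curvature is rotational, so $S_\gamma$ is rotational. Because $S_\gamma$ is $\mathcal{I}$-invariant and a rotational surface is invariant precisely under the $SO(2)$ generated by its rotational Killing field, $\mathcal{I}$ must be that generator; hence the length $G$ of the rotation field coincides with the velocity of the evolution,
\begin{equation*}
G(s)=\langle\mathcal{I},\mathcal{I}\rangle^{1/2}=\frac{1}{4\kappa^{3/4}}\, .
\end{equation*}

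The heart of the proof is then the verification of the Weingarten relation \eqref{LWrelationbiconservative}. As recalled in the proof of Theorem~\ref{variationalcharacterization}, for a rotational surface one has $\kappa_1=-\kappa$ and $\kappa_2=h_{22}=\frac{1}{\kappa}\bigl(\frac{G_{ss}}{G}+\rho\bigr)$, so that $3\kappa_1+\kappa_2=0$ is equivalent to the single equation \eqref{three}, namely $G_{ss}=G(3\kappa^2-\rho)$. Substituting $G=\frac{1}{4\kappa^{3/4}}$ and using the Euler--Lagrange equation \eqref{Euler-Lagrange} in the form $\frac{d^2}{ds^2}\bigl(\kappa^{-3/4}\bigr)=\kappa^{-3/4}(3\kappa^2-\rho)$ would give exactly
\begin{equation*}
G_{ss}=\tfrac14\,\frac{d^2}{ds^2}\bigl(\kappa^{-3/4}\bigr)=\tfrac14\,\kappa^{-3/4}(3\kappa^2-\rho)=G(3\kappa^2-\rho),
\end{equation*}
which is \eqref{three}. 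The compatibility (Gauss--Codazzi) equation \eqref{Gauss-Codazzi} requires no separate check, since $S_\gamma$ is an honest immersed surface in the space form and therefore automatically satisfies it. With \eqref{three} established, $S_\gamma$ is a rotational linear Weingarten surface obeying \eqref{LWrelationbiconservative}, whence biconservative by Proposition~\ref{LW}; and it is non-CMC because $\kappa_1=-\kappa$, $\kappa_2=3\kappa$ give mean curvature $H=\kappa$, which is non-constant by hypothesis.

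I expect the main obstacle to be the precise identification in the second step: showing that the Killing field $\mathcal{I}$ driving the binormal evolution is genuinely the rotational Killing field of $S_\gamma$, so that the evolution velocity $\frac{1}{4\kappa^{3/4}}$ may legitimately be read off as the length $G$ entering \eqref{three}. Once $G$ is correctly matched, the remaining Weingarten computation collapses to the Euler--Lagrange equation and is routine.
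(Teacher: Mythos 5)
Your proposal is correct and follows essentially the same route as the paper: extend $\mathcal{I}$ to a Killing field, apply Proposition~\ref{rot} to get rotationality, identify the evolution velocity $\frac{1}{4\kappa^{3/4}}$ with the length $G$ of the rotation field, and observe that the Euler--Lagrange equation \eqref{Euler-Lagrange} is precisely \eqref{three}, i.e.\ the Weingarten relation $3\kappa_1+\kappa_2=0$ via $\kappa_1=-\kappa$ and $\kappa_2=h_{22}$. Your version of the key identity, $\frac{G_{ss}}{G}=3\kappa^2-\rho$, carries the correct sign (the paper's displayed $\frac{V_{ss}}{V}=3\kappa^2+\rho$ is a typo), and your explicit remarks on the Gauss--Codazzi compatibility and the non-CMC conclusion fill in steps the paper leaves implicit.
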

\begin{proof} Take any planar extremal curve of $\mathbf{\Theta}$, \eqref{curveture-energy}, then as explained above, we can locally define the $\mathcal{I}$-invariant surface $S_\gamma=\{\phi_t(\gamma(s)\}$, where $\{\phi_t\,;\, t\in\mathbb{R}\}$ is the one-parameter group of isometries determined by $\mathcal{I}$. Furthermore, the square of the length of the Killing vector field $\mathcal{I}$ is given by
\begin{equation}
V^2(s)=\langle\mathcal{I},\mathcal{I}\rangle=\frac{1}{16\kappa^{3/2}}\, . \label{Gs}
\end{equation}
Then, as the evolution is made by isometries, $\gamma$ and all its congruent copies are planar extremals of $\mathbf{\Theta}$, \eqref{curveture-energy}. Now, from Proposition \ref{rot} we have that $S_\gamma$ is a rotational surface. Finally, any $\gamma_t$ verifies the Euler-Lagrange equation \eqref{Euler-Lagrange}, which is, using \eqref{Gs}, equivalent to
\begin{equation}
\frac{V_{ss}}{V}=3\kappa^2+\rho\, .\nonumber
\end{equation}
Thus, using that $\kappa_1=-\kappa$ and $\kappa_2=h_{22}$ we get $3\kappa_1+\kappa_2=0$. That is, $S_\gamma$ is a rotational linear Weingarten surface verifying \eqref{LWrelationbiconservative}. 
\end{proof}

Notice that Theorem \ref{converse} gives a way of constructing non-CMC biconservative surfaces of $N^3(\rho)$. In fact, together with Theorem \ref{variationalcharacterization}, it characterises non-CMC biconservative surfaces as the binormal evolution surfaces generated by a planar extremal of $\mathbf{\Theta}$, \eqref{curveture-energy}. This characterisation also allows us to analyse global properties of the binormal evolution surfaces based on topological facts about the profile curves. In \cite{Nistor}, \cite{Nistor-Oniciuc} and \cite{Nistor-Oniciuc-2}, the existence of complete non-CMC biconservative surfaces has been proved for both $\mathbb{R}^3$ and $\mathbb{S}^3(\rho)$. Moreover, in \cite{Barros-Garay}, the authors have proved the existence of complete non-compact rotational surfaces verifying the linear relation \eqref{LWrelationbiconservative} between their principal curvatures when $\rho\leq 0$. In \S 5, making use of our characterisation of the profile curve, we are going to study the existence of non-CMC closed biconservative surfaces.

\section{Closed Non-CMC Biconservative Surfaces of 3-Space Forms}

The main purpose of this section is to study the existence of closed (compact without boundary) non-CMC biconservative surfaces in 3-space forms. To fulfill this objective, we are going to use the characterisation introduced in the previous section. First of all, we need the orbits of the rotation to be closed, that is, euclidean circles. Notice that the value of the constant of integration $d$ plays an essential role, as proved in \cite{Arroyo-Garay-Pampano}. In fact, the orbits of the rotation are euclidean circles if and only if $d$ is positive. Therefore, we need to constraint the constant of integration and, after that, we have two options in order to obtain closed surfaces. On one hand, if the critical curve cuts the axis of rotation sufficiently many times, then the rotational surface will be closed. On the other hand, closed critical curves also give rise to closed surfaces. 

Observe that a critical curve $\gamma$ is completely determined (up to rigid motions) by its curvature, $\kappa(s)$, which must be a solution of the first integral of the Euler-Lagrange equation \eqref{firstintegral}. Now, we need the right hand side of equation \eqref{firstintegral} to be positive. For notation convenience we write $u=\kappa^{1/2}$ and, therefore, equation \eqref{firstintegral} reads
\begin{equation}
u_s^2=\frac{4}{9}u^2\left( 16\, d\, u^3-9\,u^4-\rho\right).\label{u}
\end{equation}
Then, the following polynomial must be positive for some values of $u$
\begin{equation}
Q(u)=16\, d\, u^3-9\, u^4-\rho>0\,. \label{condition}
\end{equation}
We have that $Q(u)$ tends to $-\infty$, whenever $u$ tends to either $+\infty$ or $-\infty$. Moreover, $u=4d/3$ represents a (local) maximum for $Q(u)$. Therefore, condition \eqref{condition} is verified for some values of $u$ if and only if $Q(4d/3)>0$, which gives an extra constraint on the parameter $d$.

\begin{figure}[H]
\begin{center}
\includegraphics[width=.4\textwidth]{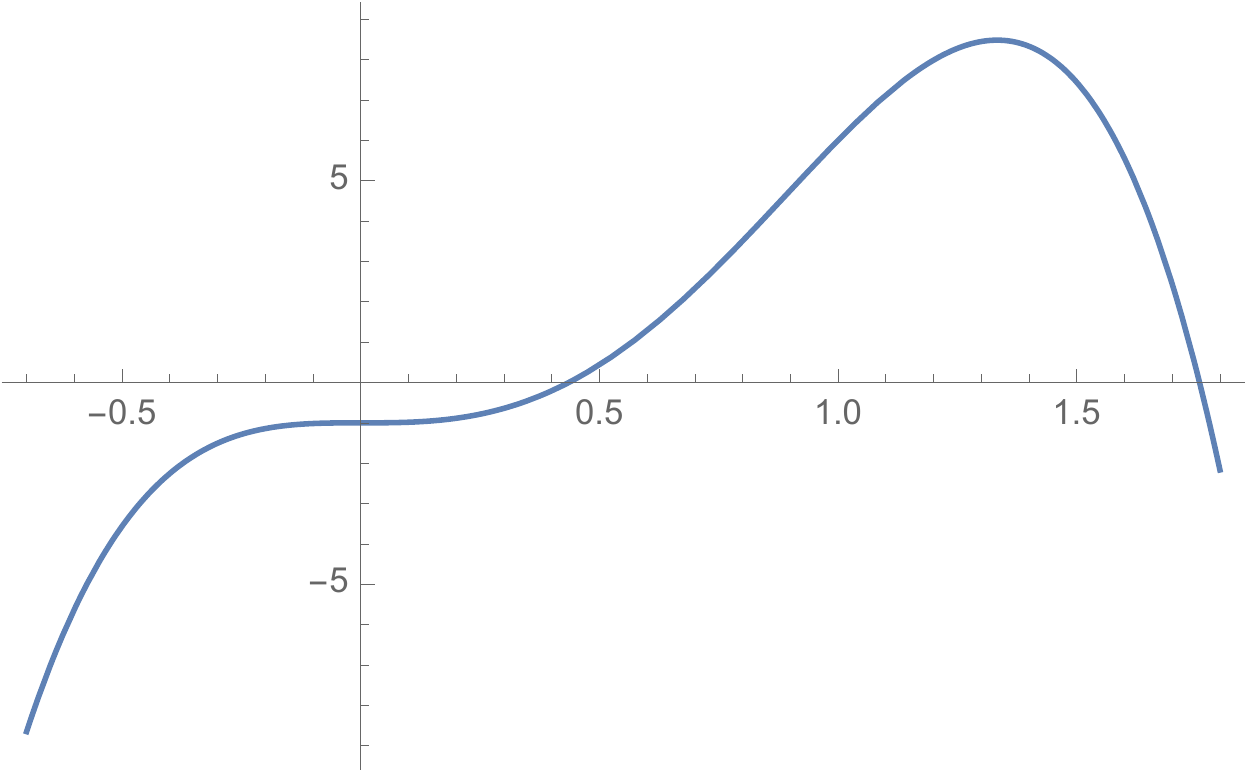}
\end{center}
\caption{Plot of the polynomial $Q(u)$ for $\rho=1$ and $d=1$.}\label{polynomial}
\end{figure}

To be more precise, this extra constraint only appears when $\rho>0$ (since for $\rho\leq 0$ is always true), and in this case we have
\begin{equation}
d>d_*=\frac{\left(27\,\rho\right)^\frac{1}{4}}{4}\, . \label{constraintd}
\end{equation}

Notice that this argument also shows the existence of just two roots of $Q(u)$ (see Figure \ref{polynomial}). Let us call $\alpha$ and $\beta$ these roots, where $\beta<\alpha$. Reversing the change of variable $u=\kappa^{1/2}$, they will become the maximum and minimum curvatures of the profile curve $\gamma$, respectively. Indeed, we have $\beta<u<\alpha$ for any $u$ that verifies \eqref{condition}.

If the profile curve $\gamma$ happens to cut the axis of rotation, then there will be some fixed points in the evolution under the $\mathcal{I}$-flow. However, from Proposition \ref{carcenter}, we have that the $\mathcal{I}$-flow has fixed points along $\gamma$ if and only if the curvature, $\kappa(s)$, tends to infinity, which is not possible since $u$ (and, therefore, the curvature) is bounded. Thus, the only option to find closed surfaces is that the profile curve is closed.

Observe that a necessary, not sufficient, condition for a curve to be closed is to have periodic curvature. Let us assume for a moment that there exist critical curves for $\mathbf{\Theta}$, \eqref{curveture-energy}, with periodic curvature, then we can obtain conditions for both $\gamma$ and $S_\gamma$ to be closed. Indeed, adapting the computations of \cite{Arroyo-Garay-Mencia}, if we define the function
\begin{equation}
\Lambda(d)=12\int_o^\varrho \frac{\kappa^{7/4}}{16\,d\,\kappa^{3/2}-\rho}\, ds\, , \label{Lambda(d)}
\end{equation}
where $\varrho$ is the period of $\kappa(s)$ and $d>0$ is the constant of integration given by \eqref{integral}, we have the following sufficient condition. 

\begin{prop}\label{closed} Let $\gamma\subset N^3(\rho)$ be a planar critical curve for $\mathbf{\Theta}$, \eqref{curveture-energy}, with periodic curvature $\kappa(s)$, then $\gamma(s)$ is closed if and only if the function $\Lambda(d)$, \eqref{Lambda(d)}, vanishes for $\rho\leq 0$, or it is equal to $\frac{2\,n\,\pi}{m\,\sqrt{\rho\, d}}$, for some integers $n$ and $m$, when $\rho>0$.
\end{prop}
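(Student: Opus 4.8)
The plan is to reduce the closedness of the profile curve $\gamma$ to a finite-order condition on the isometry that advances one period of its curvature, and then to evaluate that isometry explicitly by means of the first integral \eqref{firstintegral}. Throughout I work inside the totally geodesic surface $N^2(\rho)\subset N^3(\rho)$ containing $\gamma$, with rotation axis the geodesic $\zeta$ whose associated Killing field is the extension of $\mathcal{I}$, \eqref{I}.

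First I would record how the curvature controls the position of $\gamma$ relative to $\zeta$. Since $\langle\mathcal{I},\mathcal{I}\rangle=1/(16\kappa^{3/2})$ by \eqref{Gs}, the orbit radius, and hence the distance $r(s)$ from $\gamma(s)$ to $\zeta$, is a strictly monotone function of $\kappa(s)$; in particular $\gamma$ never meets $\zeta$ (in agreement with the absence of fixed points of the $\mathcal{I}$-flow coming from Proposition \ref{carcenter}), it stays in a band bounded away from $\zeta$ and, when $\rho>0$, away from the poles of $\zeta$, and the extrema of $r$ occur exactly at the extrema of $\kappa$. Because $\kappa$ obeys the autonomous first integral \eqref{firstintegral}, it is symmetric about each of its critical points; at such a point $r_s=0$, so the unit tangent $T$ is parallel to $\zeta$ and the geodesic issuing from $\gamma$ in the normal direction meets $\zeta$ orthogonally.

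Next I would exploit this symmetry to identify the monodromy. The reflection of $N^2(\rho)$ across the normal geodesic at a critical point of $\kappa$ preserves $\gamma$ while reversing its orientation, since it preserves the curvature function. Composing the two reflections attached to the maximum and the minimum of $\kappa$ contained in one period $\varrho$ yields an orientation-preserving isometry $M$ satisfying $M(\gamma(s))=\gamma(s+\varrho)$; as a product of two reflections in geodesics \emph{both} orthogonal to $\zeta$, $M$ is precisely the translation along $\zeta$ (a Euclidean or hyperbolic translation when $\rho\le 0$, a rotation about the polar axis of $\zeta$ when $\rho>0$). Thus $\gamma$ is closed if and only if $M^m=\mathrm{id}$ for some integer $m$, which reduces closedness to a single scalar invariant: the axial progression $\Psi$ produced by $M$. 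To compute $\Psi$ I would write $T$ in Fermi coordinates about $\zeta$, pass to a $\kappa$-integral via \eqref{u} (equivalently \eqref{firstintegral}), and simplify using the identity $16\,d\,\kappa^{3/2}-\rho=16\,\kappa^{3/2}\langle\mathcal{J},\mathcal{J}\rangle$, which follows at once from the first integral \eqref{integral}; this exhibits the progression as a constant multiple of $\Lambda(d)$, \eqref{Lambda(d)}, and in the sphere case gives $\Psi=\sqrt{\rho d}\,\Lambda(d)$. Finally I would read off the finite-order condition: for $\rho\le 0$ the axis $\zeta$ is a complete non-compact geodesic, so the only finite-order translation is the identity and closedness forces $\Lambda(d)=0$; for $\rho>0$ the axis is a great circle of length $2\pi/\sqrt{\rho}$ and $M$ is the rotation by the angle $\sqrt{\rho d}\,\Lambda(d)$, which has finite order exactly when $m\,\sqrt{\rho d}\,\Lambda(d)=2n\pi$ for integers $m,n$, i.e. $\Lambda(d)=2n\pi/(m\sqrt{\rho d})$.

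The hard part will be the two steps underlying the reduction to $\Lambda(d)$. First, one must justify rigorously that $M$ is a \emph{pure} axial motion, with no residual rotation, so that the periodicity of $\gamma$ is genuinely equivalent to $M^m=\mathrm{id}$ (this is what makes the criterion sufficient, not merely necessary); this rests on the symmetry analysis and on the coincidence of the extrema of $\kappa$ and of $r$. Second, one must carry out the axial-progression integral and match it with $\Lambda(d)$ \emph{including the exact normalising constants} $\sqrt{d}$ and $\sqrt{\rho}$, which is the delicate computation adapted from \cite{Arroyo-Garay-Mencia}; the rewriting $16\,d\,\kappa^{3/2}-\rho=16\,\kappa^{3/2}\langle\mathcal{J},\mathcal{J}\rangle$ is the key algebraic simplification that turns $\Lambda(d)$ into the natural precession-type integral $\tfrac{3}{4}\int_0^{\varrho}\kappa^{1/4}/\langle\mathcal{J},\mathcal{J}\rangle\,ds$ and makes this matching tractable.
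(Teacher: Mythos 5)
Your proposal is correct and follows essentially the route the paper intends: the paper states Proposition \ref{closed} by deferring the computation to \cite{Arroyo-Garay-Mencia}, and your reflection--monodromy argument plus the axial progression integral reproduces exactly the angle $\psi(s)$ in the explicit parametrization of $S_\gamma$ at the end of \S 5, whose increment over one period of $\kappa$ is $-\sqrt{\rho\,d}\,\Lambda(d)$. The identity $16\,d\,\kappa^{3/2}-\rho=16\,\kappa^{3/2}\langle\mathcal{J},\mathcal{J}\rangle$ you invoke is indeed the correct consequence of \eqref{integral} and \eqref{Gs}, and the remaining normalisation check you flag is the only computation left to carry out.
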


Now, making use of Proposition \ref{closed}, the following result is clear, since, for $\rho\leq 0$, the integrand of \eqref{Lambda(d)} is always positive and, therefore, $\Lambda(d)$ never vanishes.

\begin{prop}\label{non-existence-closed} There are no closed non-CMC biconservative surfaces in 3-space forms, $N^3(\rho)$, with $\rho\leq 0$.
\end{prop}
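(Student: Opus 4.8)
The plan is to combine Proposition~\ref{closed} with the sign of the integrand in \eqref{Lambda(d)}. By the reduction carried out in \S 5, any closed non-CMC biconservative surface in $N^3(\rho)$ must arise as the binormal evolution surface $S_\gamma$ of a planar critical curve $\gamma$ of $\mathbf{\Theta}$, and we already know that $\gamma$ cannot cut the rotation axis (the curvature would have to blow up, but it is pinned between the two roots $\beta<u<\alpha$ of $Q$). Hence the only route to a closed surface is a genuinely closed profile curve, and Proposition~\ref{closed} gives the exact criterion for closure in terms of $\Lambda(d)$.

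First I would record that for $\rho\le 0$ the constraint \eqref{constraintd} is vacuous, so the constant of integration $d$ ranges over all positive reals (positivity of $d$ being forced by the requirement that the rotation orbits be euclidean circles, as noted before Proposition~\ref{closed}). Next I would examine the integrand of $\Lambda(d)$, namely $\kappa^{7/4}/(16\,d\,\kappa^{3/2}-\rho)$. The numerator $\kappa^{7/4}$ is strictly positive along a non-geodesic critical curve. For the denominator, since $d>0$, $\kappa>0$ and $\rho\le 0$, we have $16\,d\,\kappa^{3/2}-\rho\ge 16\,d\,\kappa^{3/2}>0$; indeed subtracting a non-positive $\rho$ only increases the value. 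Therefore the integrand is strictly positive on the whole range of integration.

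Consequently $\Lambda(d)=12\int_0^\varrho(\text{positive})\,ds>0$ for every admissible $d>0$, so $\Lambda(d)$ never vanishes. By the closure criterion of Proposition~\ref{closed} (which, for $\rho\le 0$, demands precisely $\Lambda(d)=0$), no critical curve with periodic curvature can close up. Since closure of $\gamma$ was shown to be the \emph{only} mechanism producing a closed surface when $\rho\le 0$, it follows that there are no closed non-CMC biconservative surfaces in $N^3(\rho)$ for $\rho\le 0$, which is the assertion of Proposition~\ref{non-existence-closed}.

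I do not anticipate a serious obstacle here: the statement is essentially a corollary of Proposition~\ref{closed} together with an elementary sign check. The only point requiring a little care is to confirm that no closed surface can slip through via the profile curve meeting the axis; this is exactly the case already excluded by the boundedness of $\kappa$ and the characterisation of fixed points of the $\mathcal{I}$-flow in Proposition~\ref{carcenter}. Once that alternative is ruled out, the strict positivity of the integrand closes the argument immediately.
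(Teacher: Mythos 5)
Your argument is exactly the paper's: the paper deduces Proposition~\ref{non-existence-closed} directly from Proposition~\ref{closed} by observing that for $\rho\leq 0$ the integrand of $\Lambda(d)$ is always positive, so $\Lambda(d)$ never vanishes, with the exclusion of profile curves meeting the axis already handled in the preceding discussion of \S 5. Your write-up is correct and simply makes those same steps more explicit.
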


We point out that Proposition~\ref{non-existence-closed} can be also deduced from \cite{Nistor-Oniciuc-3} by using a different approach. 

If $N^3(\rho)=\mathbb{S}^3(\rho)$, we will prove the existence of closed non-CMC biconservative surfaces. We begin by checking that there are critical curves of $\mathbf{\Theta}$, \eqref{curveture-energy}, in $\mathbb{S}^2(\rho)$ whose curvature is periodic. What is more, we have the following proposition.

\begin{prop}\label{morse} When defined in the whole real line, all critical curves for $\mathbf{\Theta}$, \eqref{curveture-energy}, in $\mathbb{S}^2(\rho)$ have periodic curvature.
\end{prop}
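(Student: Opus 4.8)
The plan is to analyze the first integral \eqref{u} for the variable $u=\kappa^{1/2}$ directly, treating it as a one-dimensional conservative system, and to show that every solution oscillates periodically between the two roots $\beta<\alpha$ of the polynomial $Q(u)$ identified in \eqref{condition}. When $N^2(\rho)=\mathbb{S}^2(\rho)$ we have $\rho>0$, so by \eqref{constraintd} the constant of integration must satisfy $d>d_*$, and under this constraint $Q(u)$ has exactly the two simple positive roots $\beta,\alpha$ with $Q(u)>0$ precisely on $(\beta,\alpha)$. First I would rewrite \eqref{u} in the phase-plane form $u_s^2=\tfrac{4}{9}u^2\,Q(u)$ and observe that since $u=\kappa^{1/2}>0$ (the curve is non-geodesic, so $\kappa>0$) the factor $u^2$ never vanishes on the relevant interval; hence the turning points of the solution $u(s)$ are exactly the simple zeros $u=\alpha$ and $u=\beta$ of $Q$.

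Next I would verify that these turning points are genuine, non-degenerate turning points rather than equilibria or points where the solution escapes. Because $\alpha$ and $\beta$ are \emph{simple} roots of $Q$ (a consequence of $d>d_*$, so that $Q(4d/3)>0$ strictly and the local maximum is not attained at a root), the right-hand side $\tfrac{4}{9}u^2Q(u)$ vanishes to first order there while its $u$-derivative is nonzero; this forces $u_{ss}\neq 0$ at the turning points, so the trajectory reflects smoothly off each endpoint. I would then confirm there is no equilibrium of the system strictly inside $(\beta,\alpha)$: an equilibrium would require $u_s=0$ together with $u_{ss}=0$, i.e. a point in the open interval where $Q$ vanishes, which is impossible since $Q>0$ on $(\beta,\alpha)$. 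Consequently the solution must travel monotonically from one turning point to the other and back.

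The core step is then a standard oscillation argument. The solution $u(s)$ is confined to the compact interval $[\beta,\alpha]$, it increases when $u_s>0$ until it reaches $\alpha$, reflects, decreases until it reaches $\beta$, reflects again, and so on. The half-period is the finite quantity
\begin{equation}
\frac{\varrho}{2}=\int_\beta^\alpha \frac{du}{\sqrt{\tfrac{4}{9}\,u^2\,Q(u)}}\, ,\nonumber
\end{equation}
and the integral converges because near each simple root the integrand has an integrable inverse-square-root singularity of the type $(u-\beta)^{-1/2}$ or $(\alpha-u)^{-1/2}$. This yields a well-defined finite period $\varrho$, so $\kappa(s)=u(s)^2$ is periodic with period $\varrho$ whenever the solution is extended to all of $\mathbb{R}$, which is exactly the claim. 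To finish I would note that the solution extends to the whole real line precisely because $u$ stays in the bounded region $[\beta,\alpha]$ away from the singularity $\kappa\to\infty$, so no finite-time blow-up occurs and the maximal interval of existence is all of $\r$.

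I expect the main obstacle to be the careful bookkeeping near the turning points: one must check that the simplicity of the roots $\alpha,\beta$ (guaranteed by the strict inequality $d>d_*$) genuinely rules out the degenerate scenario in which a root coincides with the critical point $u=4d/3$, since there the period integral would diverge and the curvature would instead be asymptotic rather than periodic. Showing that this degenerate case is excluded for \emph{all} admissible $d>d_*$ — that is, that the boundary value $d=d_*$ is never attained for genuine critical curves on $\mathbb{S}^2(\rho)$ — is the delicate point, and it is what upgrades ``bounded oscillation'' to ``periodic'' for every solution defined on the whole real line.
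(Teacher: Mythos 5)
Your argument is correct, but it follows a genuinely different route from the paper's. You treat $u_s^2=\tfrac{4}{9}u^2Q(u)$ as a one--dimensional conservative system and run the classical quadrature/oscillation argument: for $d>d_*$ the polynomial $Q$ has exactly two simple positive roots $\beta<\alpha$, the right-hand side vanishes to first order there (so $u_{ss}=\tfrac12\frac{d}{du}\bigl(\tfrac49u^2Q(u)\bigr)\neq 0$ at the turning points and the trajectory reflects), there are no interior equilibria, and the period integral converges because the singularities at the simple roots are of inverse-square-root type. The paper instead works in the phase plane: it observes that the level set $y^2=\tfrac49x^2Q(x)$ is a compact regular curve for $d>d_*$, realises the solution as a bounded integral curve of an explicit smooth, $d$-independent vector field on $\{\kappa>0\}$ with no zeros along the orbit, and invokes the Poincar\'e--Bendixson theorem to conclude periodicity. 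Both proofs hinge on the same structural facts about $Q$ (two simple roots, positivity strictly between them, boundedness away from $u=0$); yours is more elementary and has the added benefit of producing the explicit period $\varrho=2\int_\beta^\alpha\bigl(\tfrac49u^2Q(u)\bigr)^{-1/2}du$, which is in fact the quantity used later in the closure analysis, while the paper's is softer and avoids any discussion of the convergence of improper integrals. One small remark: the ``delicate point'' you flag at the end is not actually delicate --- a non-constant solution requires $Q>0$ somewhere, which already forces $d>d_*$ strictly (this is exactly how \eqref{constraintd} is derived), and the borderline case $d=d_*$ only produces the constant-curvature circle $\kappa^2=\rho/3$, whose curvature is trivially periodic; so no separate exclusion argument is needed.
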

\begin{proof} Let $\gamma(s)$ be a critical curve for $\mathbf{\Theta}$, \eqref{curveture-energy}. Then, the non-constant curvature of $\gamma(s)$ must be a solution of the first integral of the Euler-Lagrange equation \eqref{firstintegral}, where $d>d_{*}$, see \eqref{constraintd}. To simplify notations we put $x=u=\kappa^{1/2}$ and $y=x_s$. Then, \eqref{firstintegral} can be rewritten as (see \eqref{u})
$$y^2=\frac{4}{9}x^2\left(16 d x^3-9x^4-\rho\right)=\frac{4}{9}x^2 Q(x).$$
This is an algebraic curve which, by the standard square root method of algebraic geometry and above analysis of the polynomial $Q(x)$ (see Figure \ref{polynomial}), it is closed for any $d>d_{*}$. 
Thus, the curve $c(s)=(x(s), y(s))$ is included in the trace of the compact regular curve $y^2=({4}/{9})x^2 Q(x)$ and it can be thought as a bounded integral curve of the smooth vector field
$$
X(\kappa,z)=\left(z,\frac{1}{\sqrt{\kappa}}\left[\frac{5}{2}z^2+\frac{2}{3}\rho \kappa-2\kappa^3\right]\right)
$$
defined in $\{(\kappa,z)\in \r^2 \colon \kappa>0 \}$.
This implies that $c(s)$ is smooth and defined on the whole $\r$. Finally, since the vector field $X(\kappa,z)$ has no zeros along the curve $c(s)$ when $d>d^*$, we conclude, applying the Poincare-Bendixon Theorem, that $c(s)$ is a periodic curve.
\end{proof}

\begin{rem}\label{rem-simple}
Of course, since the profile curve has periodic curvature, the binormal evolution surface generated by it is complete. Moreover, using the differential equation \eqref{u} satisfied by $u=\kappa^{1/2}$, it is easy to check that when $u=\alpha$ or $u=\beta$ the vector field $\mathcal{J}$ has only component in $T$, that is, the profile curve $\gamma$ is parallel to the integral curves of the Killing vector field $\mathcal{J}$ in that points. Therefore, our curve is bounded between those parallels. What is more, in those points the length of $\mathcal{J}$ is never zero, since, both $\alpha$ and $\beta$ are positive. This means that $\gamma$ does not cross over the pole of the parametrization.  In fact, since the component in $T$ of the Killing vector field $\mathcal{J}$ is a non-zero multiple of $u^{1/2}$ and $u$ is always positive (it varies from $\alpha$ to $\beta$, which are, in the spherical case, positive since $Q(0)<0$), we get that $\gamma$ is never orthogonal to the integral curves of $\mathcal{J}$, that is, $\gamma$ is always going forward. Consequently, it does not cut itself in one period of its curvature, unless it gives more than one round in that period.
\end{rem}

Now, in order to assure closure, we have seen that a binormal evolution surface of $\mathbb{S}^3(\rho)$ whose profile curve $\gamma$ has periodic curvature, $\kappa(s)$, and vanishing torsion is a closed surface if and only if the function $\Lambda(d)$, \eqref{Lambda(d)}, verifies
\begin{equation}
\Lambda(d)= 12\int_o^\varrho \frac{\kappa^{7/4}}{16\,d\,\kappa^{3/2}-\rho}\, ds =\frac{2\,n\,\pi}{m\,\sqrt{\rho\, d}}\,,\label{relation-condition}
\end{equation}
for some $d>0$ and some integers $m$ and $n$ with ${\rm gcd}(m,n)=1$. The integer $n$ represents the number of rounds the curve gives around the pole in order to close up, while $m$ is the number of lobes the curve has, that is, the number of periods of the curvature. In particular, a closed curve $\gamma$ is simple if and only if it closes up in one round, that is, if it verifies the closure condition for $n=1$.

To check the closure condition \eqref{relation-condition}, we need to study the image of the function $I(d)=\sqrt{\rho\, d}\, \Lambda(d)$ as $d$ varies in the domain \eqref{constraintd}. For this purpose, first we are going to state the following technical lemma (for the proof see \S 6).

\begin{lem}\label{lemma} The function $I(d)=\sqrt{\rho\, d}\, \Lambda(d)$ is strictly decreasing in $d$. Furthermore, for any $d\in\left( d_*,+\infty\right)$, it is bounded by
\begin{equation}
\pi<I(d)=\sqrt{\rho\, d}\, \Lambda(d)<\sqrt{2}\pi\, . \nonumber
\end{equation}
\end{lem}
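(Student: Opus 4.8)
The plan is to collapse $I(d)$ to a one-parameter integral and then combine strict monotonicity with the explicit evaluation of its two boundary values.

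First I would pass to $u=\kappa^{1/2}$, for which the first integral \eqref{u} reads $u_s^2=\tfrac49 u^2 Q(u)$, $Q(u)=16d u^3-9u^4-\rho$. Over one period of $\kappa$ the variable $u$ runs from $\beta$ up to $\alpha$ and back, so writing $ds=du/u_s$ in \eqref{Lambda(d)} gives
\[
\Lambda(d)=36\int_\beta^\alpha\frac{u^{5/2}}{(16d u^3-\rho)\sqrt{Q(u)}}\,du ,
\]
where $16du^3-\rho=Q(u)+9u^4>0$ on $(\beta,\alpha)$, so that the only singularities are the integrable square-root ones at the endpoints. The decisive move is the rescaling $u=\tfrac{16d}{9}\,t$, which absorbs all dependence on $d$ and $\rho$ into the single parameter $\mu:=729\rho/(16d)^4$ and produces
\[
I(d)=\sqrt{\rho d}\,\Lambda(d)=3\sqrt{\mu}\int_{t_\beta}^{t_\alpha}\frac{t^{5/2}}{(t^3-\mu)\sqrt{t^3-t^4-\mu}}\,dt=:3\sqrt{\mu}\,F(\mu),
\]
with $t_\beta<t_\alpha$ the two roots in $(0,1)$ of $t^3(1-t)=\mu$. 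Since $\mu$ is strictly decreasing in $d$, with $\mu\to 27/256$ as $d\to d_*^+$ (see \eqref{constraintd}) and $\mu\to 0$ as $d\to\infty$, the lemma reduces to showing that $3\sqrt\mu\,F(\mu)$ is strictly increasing on $(0,27/256)$ with boundary values $\pi$ and $\sqrt2\,\pi$. This also explains why the bound is independent of $\rho$ and recovers the analysis of \cite{Arroyo-Garay-Mencia}.

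For the two boundary values I would argue by asymptotics. As $\mu\to 27/256$ the roots coalesce at the maximum $t=3/4$ of $t^3(1-t)$; replacing $g(t)=t^3-t^4-\mu$ by its quadratic approximation there (with $g''(3/4)=-9/4$) reduces $\int_{t_\beta}^{t_\alpha}dt/\sqrt g$ to $\pi/\sqrt{9/8}=2\sqrt2\,\pi/3$, and evaluating the remaining factors at $t=3/4$ gives $I\to\sqrt2\,\pi$. As $\mu\to 0$ the integral is governed by a boundary layer at the small root $t_\beta\sim\mu^{1/3}$: with the scaling $t=t_\beta+\mu^{2/3}\xi$ both $t^3-\mu$ and $\sqrt g$ become small, and after factoring the common power of $\mu$ the dominant contribution is $\tfrac1{\sqrt3}\int_0^\infty d\xi/\big((1+3\xi)\sqrt\xi\big)=\pi/3$, whence $I\to\pi$, while the region $t=O(1)$ contributes only $O(\mu^{1/6})$.

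The main obstacle is the strict monotonicity, because the endpoints $t_\beta(\mu),t_\alpha(\mu)$ move with $\mu$ and the integrand blows up there, so one cannot differentiate naively under the integral sign. I would first freeze the limits using the factorization $t^3-t^4-\mu=(t-t_\beta)(t_\alpha-t)R_\mu(t)$ with $R_\mu>0$; concretely, splitting the interval at $t=3/4$ and setting $t^3(1-t)=\mu+(\tfrac{27}{256}-\mu)\sin^2\phi$ on each monotone branch sends both endpoints to the fixed interval $\phi\in[0,\pi/2]$ and yields a smooth integrand (the apparent singularity at $\phi=\pi/2$ is removable, since there $\cos\phi$ and $3-4t$ vanish to the same order). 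With fixed limits I would then differentiate $I(\mu)$ in $\mu$ and prove the derivative is strictly positive; the sign has to emerge from the competition between the decreasing prefactors ($\sqrt\mu$ versus $\sqrt{27/256-\mu}$ and $t^3-\mu$) and the implicit $\mu$-dependence of $t(\phi,\mu)$, and making this estimate uniform in $\phi$ is where the real work lies. Finally, strict monotonicity together with the two limits forces $\pi<I(d)<\sqrt2\,\pi$ on $(d_*,\infty)$, which is the assertion of the lemma.
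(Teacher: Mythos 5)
Your reduction to the single parameter $\mu=729\rho/(16d)^4$ is correct (I checked the constant: $I(d)=3\sqrt{\mu}\int_{t_\beta}^{t_\alpha}t^{5/2}(t^3-\mu)^{-1}(t^3-t^4-\mu)^{-1/2}\,dt$ does follow from \eqref{expression} under $u=\tfrac{16d}{9}t$), and both limit computations check out: the coalescing-roots expansion at $t=3/4$ with $g''(3/4)=-9/4$ gives $\sqrt{2}\pi$, and the boundary-layer scaling $t=t_\beta+\mu^{2/3}\xi$ gives $\tfrac{1}{\sqrt{3}}\int_0^\infty\frac{d\xi}{(1+3\xi)\sqrt{\xi}}=\pi/3$ and hence $I\to\pi$, with the outer region contributing $O(\mu^{1/6})$. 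This is a genuinely different and more elementary route to the two boundary values than the paper's, which computes the $d\to d_*$ limit via a Dirac-delta representation (equivalently Lemma 4.1 of \cite{Perdomo}) and the $d\to\infty$ limit by extending the integrand to a meromorphic function \eqref{f(z)} on a slit plane and evaluating residues; your scaling also makes transparent why the bounds are independent of $\rho$.

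The genuine gap is the strict monotonicity, which is not an optional refinement here: the two limits alone do not give $\pi<I(d)<\sqrt{2}\pi$ for \emph{all} $d>d_*$, and that global bound is exactly what Theorem \ref{complete-case} needs to exclude simple profile curves ($n=1$). You correctly identify that one cannot differentiate $\int_{t_\beta(\mu)}^{t_\alpha(\mu)}$ naively (the would-be derivative carries a non-integrable $(t^3-t^4-\mu)^{-3/2}$ singularity at the moving endpoints), and you propose a trigonometric substitution to freeze the limits, but then you stop at ``the sign has to emerge from the competition between the decreasing prefactors and the implicit $\mu$-dependence of $t(\phi,\mu)$'' without producing the inequality. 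That competition is precisely the content of the lemma, and after the $\sin^2\phi$ substitution the integrand's $\mu$-derivative involves $\partial t/\partial\mu$ through the implicit equation $t^3(1-t)=\mu+(\tfrac{27}{256}-\mu)\sin^2\phi$, whose sign changes across $t=3/4$; it is not at all clear that a pointwise or even uniform-in-$\phi$ estimate closes this. The paper circumvents the difficulty differently: it rewrites $I(d)$ as a contour integral of $f(z)$ over a fixed contour $\sigma$ that stays away from all branch points, differentiates in $d$ there (legitimately, since nothing is singular on $\sigma$), deforms back, and lands on $I'(d)=\int_\beta^\alpha f_d(u)\,du$ with an explicit rational-times-radical integrand whose sign reduces to the elementary polynomial inequality $16d(32d-9u)u^6-\rho(16d+9u)u^3-\rho^2>144du^7>0$ on $(\beta,\alpha)$. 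Until you either carry out your uniform estimate or import some such mechanism that converts $I'$ into an integral of a manifestly signed quantity, the central claim of the lemma remains unproved.
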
 

Summarising our findings we obtain the proof of Theorem \ref{complete-case} as mentioned in the introduction.

\subsection{Proof of Theorem~\ref{complete-case}}

Let $m$ and $n$ be two integers such that ${\rm gcd}(m,n)=1$ and $m<2\,n<\sqrt{2}\, m$. Then 
$$
\pi<\frac{2\,n\,\pi}{m}<\sqrt{2}\,\pi\,.
$$
Now, from Lemma \ref{lemma}, the function $I(d)=\sqrt{\rho\, d}\, \Lambda(d)$ varies from $\pi$ to $\sqrt{2}\pi$ as $d$ decreases from $+\infty$ to $d_*$. Thus, there exists a  $d_{m,n}> d_*$, such that the relation \eqref{relation-condition} is verified and, therefore, the corresponding associated non-CMC biconservative surface is closed. 

\noindent Furthermore, the corresponding  surface is embedded if the profile curve is simple. Now, using Remark~\ref{rem-simple}, the profile curve is simple if and only if it closes up in one round. Therefore, when $n=1$, we need that the closure condition is  satisfied for some integer $m$. That is, we need the existence of an integer $m$ such that 
\begin{equation}
\pi<\frac{2\,\pi}{m}<\sqrt{2}\pi\, .\nonumber
\end{equation}
However, the above relation is not possible and, therefore, there are not closed non-CMC biconservative surfaces embedded in $\mathbb{S}^3(\rho)$, as stated. \hfill $\square$\\

From the proof of Theorem \ref{complete-case} we deduce that there exists a discrete biparametric family of closed non-CMC biconservative surfaces in $\mathbb{S}^3(\rho)$. In fact, we have a closed non-CMC biconservative surface for any couple of integers $m$ and $n$ such that $m<2\,n<\sqrt{2}\, m$. The first one corresponds to $n=2$ and $m=3$, that is, the binormal evolution surface with initial condition a critical curve for $\mathbf{\Theta}$, \eqref{curveture-energy}, which has 3 lobes and needs 2 rounds around the pole to close up. We explain this in Figure \ref{ProfileCurve}. The green part of  the curve corresponds with the part of the critical curve covered in one period of the curvature. Notice that, as the curvature is the same for each period of it, our critical curve is nothing but congruent copies of the green part, that is, the whole curve can be constructed by gluing smoothly $m$ copies (in these particular cases $m=3$ and $m=5$ copies, respectively) of the trace covered in one period of the curvature.

\begin{figure}[H]
\begin{center}
\hspace{-0.35cm}\includegraphics[width=5cm,height=5cm]{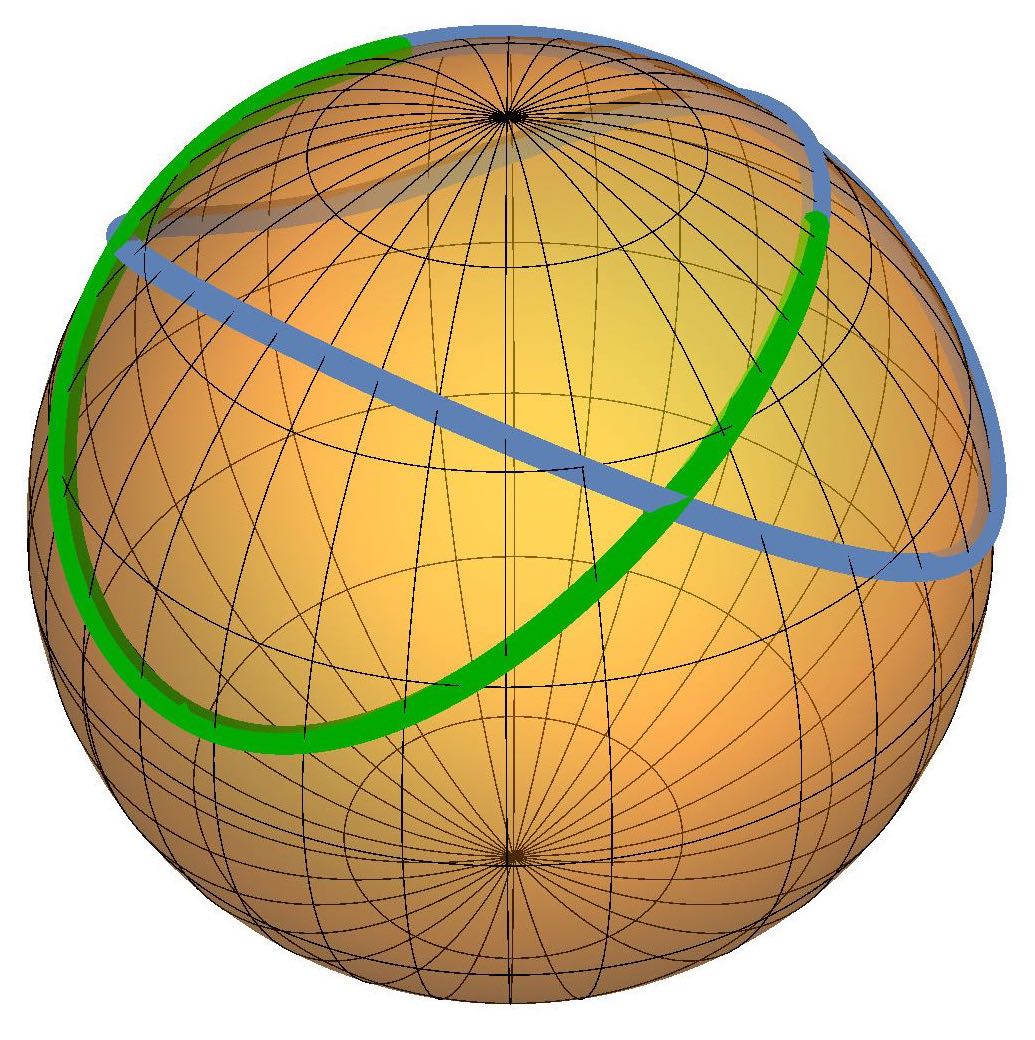}\quad\quad\quad\includegraphics[width=5cm,height=5cm]{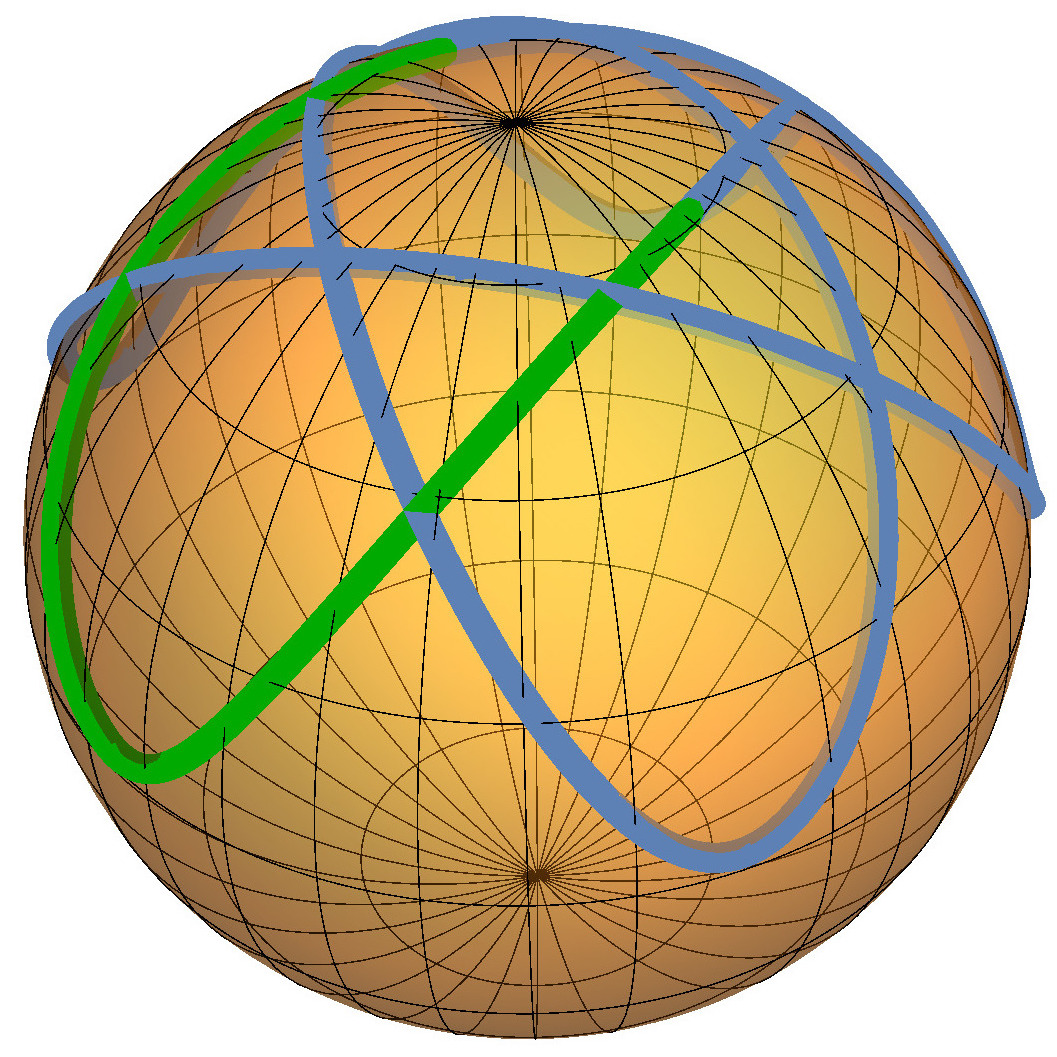}
\end{center}
\vspace{-0.4cm}
\caption{Closed critical curves for $\mathbf{\Theta}$, \eqref{curveture-energy}, in $\mathbb{S}^2(\rho)$ for $m=3$ and $n=2$ (Left) and $m=5$ and $n=3$ (Right).}\label{ProfileCurve}
\end{figure}

Furthermore, as proved in previous sections, all non-CMC biconservative surfaces are binormal evolution surfaces with initial condition a planar critical curve for $\mathbf{\Theta}$, \eqref{curveture-energy}. In the round 3-sphere, these binormal evolution surfaces $S_\gamma\subset\mathbb{S}^3(\rho)$ can be parametrized, up to an isometry of the ambient sphere, as:
\begin{equation}
x(s,\phi)=\frac{1}{4\sqrt{\rho\,d\,} \kappa^\frac{3}{4}}\left(\sqrt{\rho}\cos\phi,\sqrt{\rho}\sin\phi,\sqrt{16\,d\,\kappa^\frac{3}{2}-\rho} \sin\psi(s),\sqrt{16\,d\,\kappa^\frac{3}{2}-\rho}\cos\psi(s)\right), \nonumber
\end{equation}
where $\kappa(s)$ represents the curvature of $\gamma$, which is a solution of the Euler-Lagrange equation \eqref{Euler-Lagrange}, and $\psi(s)$ is given by
\begin{equation}
\psi(s)=-12\,\sqrt{\rho\, d} \int \frac{\kappa^{7/4}}{16\,d\,\kappa^{3/2}-\rho}\, ds\, .\nonumber
\end{equation}
Finally, notice that $\gamma(s)=x(s,0)$ is a parametrization of the profile curve. In Figure~\ref{Surface}, by using the above parametrization we show a plot of the stereographic projection of the closed non-CMC biconservative surface in $\mathbb{S}^3(\rho)$ for $m=3$ and $n=2$.

\begin{figure}[H]
\begin{center}
\includegraphics[width=6cm,height=6cm]{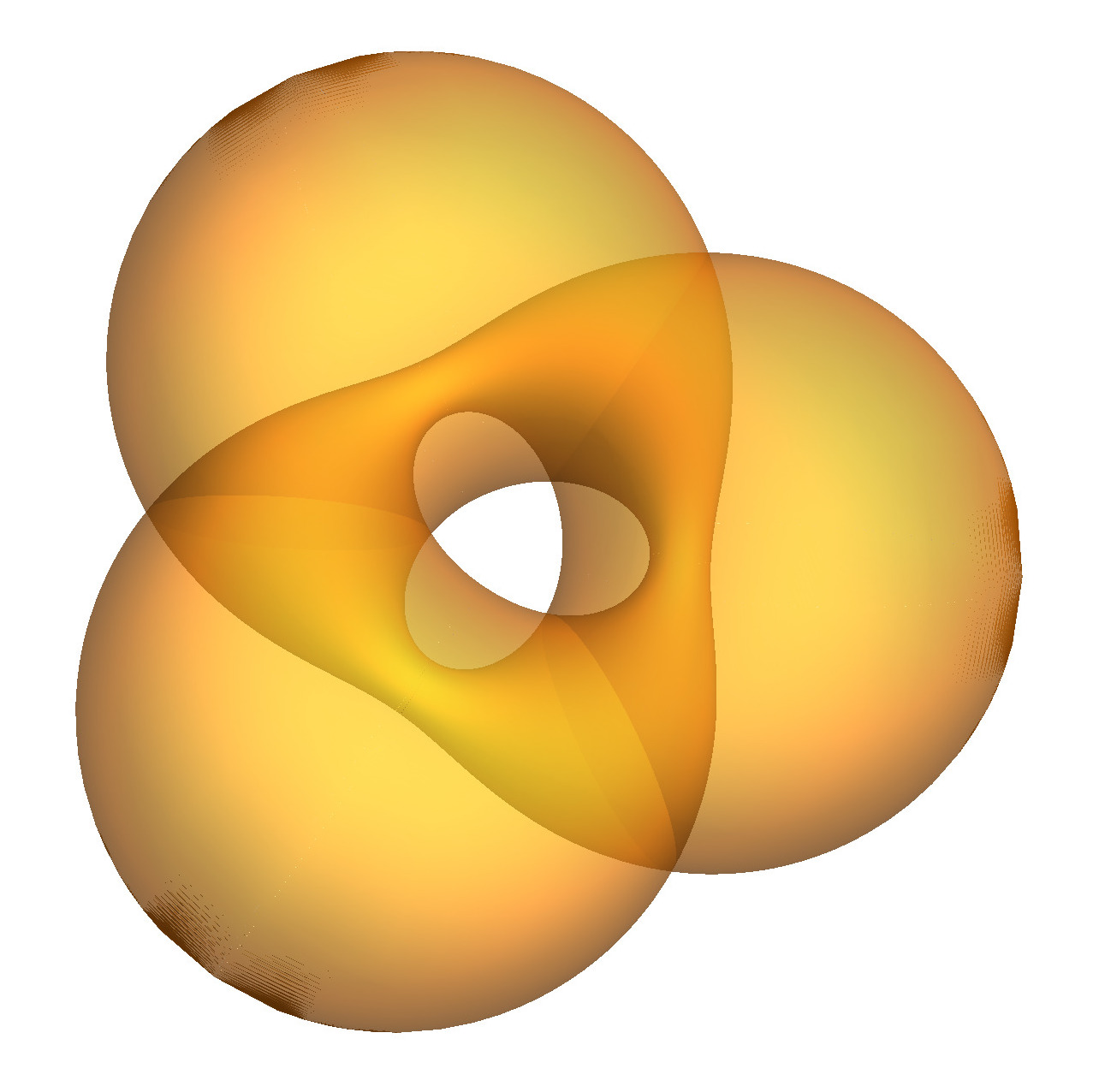}
\end{center}
\vspace{-0.5cm}
\caption{Stereographic projection of the closed non-CMC biconservative surface in $\mathbb{S}^3(\rho)$ for $m=3$ and $n=2$.}\label{Surface}
\end{figure}

\begin{rem}
Note that, from \eqref{LWrelationbiconservative}, we deduce immediately that the mean curvature $H$ of $S_\gamma$, along the profile curve $\gamma$, coincides with the curvature $\kappa$. Moreover, the parametrization  $x(s,\phi)$ is obtained by the action on  $\gamma(s)=x(s,0)$ of the one-parameter group of isometries generated by the Killing vector field of  $\mathbb{S}^3(\rho)$ given by $\mathcal{I}=\sqrt{d}(y\partial/\partial x-x\partial/\partial y)$. Then a point $\gamma(s_o)=x(s_o,0)$ generates a geodesic orbit of $S_\gamma$, under the action of $\mathcal{I}$, if $s_o$ satisfies $V_s (s_o)=0$ where $V(s)$ is the norm of $\mathcal{I}$ along $\gamma(s)$. A straightforward computation gives $V_s (s)=-3\kappa_s/(16\kappa^{7/4})$. We conclude that the points where the derivative of the curvature of the profile curve $\gamma$ vanishes determine closed geodesics on the surface and $\grad H$ vanishes along those geodesics.
\end{rem}
\section{Proof of Lemma \ref{lemma}}

Throughout this section we will prove Lemma \ref{lemma}. For this end, observe that, with the notation introduced in previous section and taking into account the symmetry, $\Lambda(d)$ can be written as
\begin{equation}
\Lambda(d)=24\int_o^{\varrho/2} \frac{u^{7/2}}{16\,d\,u^3-\rho}\, ds\, ,\nonumber
\end{equation}
Now, since in a half period of the curvature the function $u$ is increasing, we can use equation \eqref{u} to make a change of variable, obtaining that
\begin{equation}
\Lambda(d)=36\int_\beta^\alpha \frac{u^{5/2}}{\left(16\,d\,u^3-\rho\right)\sqrt{16\,d\, u^3-9\,u^4-\rho}}\, du\, . \label{expression}
\end{equation}
At this point, we divide our proof in three different parts.

\subsection{Part (i)} 

We begin by considering the limit of $I(d)=\sqrt{\rho\,d}\,\Lambda(d)$ when $d$ tends to $d_*$ (see the definition in \eqref{constraintd}). We will compute this limit with the aid of the Dirac's delta, $\delta(u-\varpi)$, since the limit of the integrand is zero everywhere but at $u=4d_*/3$, where it goes to infinity. This suggests that the integrand is a multiple of $\delta(u-\varpi)$ with $\varpi=4d_*/3$. Therefore, we first recall that any general Dirac's delta $\delta(u-\varpi)$ can be represented by the limit
\begin{equation}
\delta(u-\varpi)=\lim_{\varepsilon\rightarrow 0} \frac{\varepsilon}{\pi\left( (u-\varpi)^2+\varepsilon^2\right)}\, . \nonumber
\end{equation}
Let us multiply and divide the integrand of $\Lambda(d)$, \eqref{expression}, by this limit for $\varepsilon=d-d_*$. That is,
\begin{eqnarray}
\lim_{d\rightarrow d_*} I(d)&=&\lim_{d\rightarrow d_*} \sqrt{\rho\, d}\, \Lambda(d)=36\lim_{d\rightarrow d_*}\int_\beta^\alpha \frac{\sqrt{\rho\, d} \,u^\frac{5}{2}}{\left(16\,d\,u^3-\rho\right)\sqrt{16\,d\, u^3-9\,u^4-\rho}}\, du\nonumber\\ 
&=& 36 \int_\mathbb{R}\lim_{d\rightarrow d_*} \frac{\pi\sqrt{\rho\, d}\, u^\frac{5}{2}\chi_{(\beta,\alpha )}(u)\left(\left(u-\left(\frac{\rho}{3}\right)^\frac{1}{4}\right)^2+\left(d-d_*\right)^2\right)}{\left(d-d_*\right)\left(16\,d\,u^3-\rho\right)\sqrt{16\, d\, u^3-9\,u^4-\rho}}\,\delta\left(u-\left(\frac{\rho}{3}\right)^\frac{1}{4}\right)\, du\nonumber
\end{eqnarray}
Moreover, we recall that a nice property of these distributions that will be essential in this first part of the proof is the following
\begin{equation}
\int_\mathbb{R} f(u)\,\delta(u-\varpi)\, du=f(\varpi)\, ,\nonumber
\end{equation}
for any function $f$ and any constant $\varpi$. Notice that since we have taken $\varepsilon=d-d_*$, the limit $\varepsilon\rightarrow 0$ changes to $d\rightarrow d_*$. Indeed, this is quite convenience, since whenever $d$ is close to $d_*$, $\alpha$ and $\beta$ converge linearly in $d$ to $\left(\rho/3\right)^{1/4}$, that is, precisely to $4d_*/3$. Thus, by using this property we have
\begin{equation}
\lim_{d\rightarrow d_*} I(d)=   36 \lim_{d\rightarrow d_*} \frac{\pi\sqrt{\rho\, d}\, \left(\frac{\rho}{3}\right)^\frac{5}{8}}{\left(16\,d\,\left(\frac{\rho}{3}\right)^\frac{3}{4}-\rho\right)\sqrt{9\, \left(\frac{\rho}{3}\right)^\frac{1}{2}+\sqrt{3\,\rho}\left(1+2\left(\frac{\rho}{3}\right)^\frac{1}{4}\right)}} \,=\, \, \sqrt{2}\, \pi\, . \nonumber
\end{equation}

This limit can also be computed using Lemma 4.1 of \cite{Perdomo}. For our particular case, when $d=d_*$, the only root of the polynomial $Q(u)$, \eqref{condition}, is $u=4d_*/3$. Furthermore, as explained before, $4d_*/3$ is a local maximum of $Q(u)$ for $d=d_*$. Therefore, the result of this lemma can be summarised as follows
\begin{equation}
\lim_{d\rightarrow d_*} I(d)=\frac{36\sqrt{\rho\,d_*}\left(\frac{\rho}{3}\right)^{5/8}\pi}{\left(16\,d_*\,\left(\frac{\rho}{3}\right)^{3/4}-\rho\right)\sqrt{-\frac{1}{2}Q''\left(\left(\frac{\rho}{3}\right)^{1/4}\right)}}=\sqrt{2}\,\pi\, , \nonumber
\end{equation}
obtaining the desired result.

\subsection{Part (ii)}

Let us now consider the limit of $I(d)=\sqrt{\rho\,d}\,\Lambda(d)$ when $d$ goes to infinity. For this purpose, we need to work in the complex plane $\mathbb{C}$. We begin by defining the complex function
\begin{equation}
h(z)=-i\left(\sqrt{-z}\right)^5\left(\alpha-z\right)\sqrt{\frac{z-\beta}{\alpha-z}}\sqrt{\left(z-\omega_1\right)\left(z-\bar{\omega}_1\right)}\, \nonumber
\end{equation}
where $\omega_1$ and $\bar{\omega}_1$ are the two pure complex roots of $Q(u)$, and the square root symbol denotes the principal branch of it. 

If $U_1=\{ x+i\,y\in\mathbb{C}\,;\, y=0,\, \beta< x <\alpha\}$, then the Moebius transformation $\frac{z-\beta}{\alpha-z}$ maps $U_1$ to the set of positive real numbers $\mathbb{R}^+$. Now, $U_2=\{x+i\,y\in\mathbb{C}\, ;\, y=0,\, x<0\}$ and we obtain that $\sqrt{-z}$ is well-defined and analytic in $\mathbb{C}- U_2$. Finally, $\sqrt{\left(z-\omega_1\right)\left(z-\bar{\omega}_1\right)}$ is also analytic far from $\omega_1$ and $\bar{\omega}_1$. That is, the complex function
\begin{equation}
f(z)=\frac{36\,\sqrt{\rho\,d}\,z^5}{\left(16\,d\,z^3-\rho\right)h(z)}\, , \label{f(z)}
\end{equation}
is well-defined and holomorphic for any $z\in\mathbb{C}- \left(U_1\cup U_2\cup \{0,\beta,\alpha,\omega_1,\bar{\omega}_1,\omega_2,\bar{\omega}_2\}\right)$ where $\omega_2$ and $\bar{\omega}_2$ represent the pure complex roots of $16d z^3-\rho$. 

Moreover, notice that we have the following limits
\begin{eqnarray}
&& \,\lim_{\epsilon\rightarrow 0} \sqrt{-\left(x+i\,\epsilon\right)}\sqrt{\left(x+i\,\epsilon-\omega_1\right)\left(x+i\,\epsilon-\bar{\omega}_1\right)}=i\sqrt{x}\sqrt{\left(x-\omega_1\right)\left(x-\bar{\omega}_1\right)}\, , \nonumber\\
&& \lim_{\epsilon\rightarrow 0^+}\sqrt{\frac{x+i\,\epsilon-\beta}{\alpha-x-i\,\epsilon}}=\sqrt{\frac{x-\beta}{\alpha-x}}\, , \nonumber\\
&& \lim_{\epsilon\rightarrow 0^-}\sqrt{\frac{x+i\,\epsilon-\beta}{\alpha-x-i\,\epsilon}}=-\sqrt{\frac{x-\beta}{\alpha-x}}\, . \nonumber
\end{eqnarray}
And, therefore,
\begin{eqnarray}
&& \lim_{\epsilon\rightarrow 0^+}h(x+i\,\epsilon)=\left(\sqrt{x}\right)^5\sqrt{Q(x)}\, , \nonumber\\
&& \lim_{\epsilon\rightarrow 0^-}h(x+i\,\epsilon)=-\left(\sqrt{x}\right)^5\sqrt{Q(x)}\, . \nonumber
\end{eqnarray}

Now, we define the curve $\sigma$ such that it surrounds all the singularities of the function $f(z)$, \eqref{f(z)}, and having the shape of a big enough square being sufficiently close to the imaginary axis (see the green curve in Figure \ref{path}). We are going to denote by $\sigma_\omega$ the circle of radius $\epsilon$ around $\omega$, where $\omega$ may be either $u_o=\frac{1}{2}\left(\frac{\rho}{2d}\right)^{1/3}$, $\omega_1$, $\bar{\omega}_1$, $\omega_2$ or $\bar{\omega}_2$ (see the blue paths in Figure \ref{path}). Finally, $\sigma_*$ is the curve that surrounds $\beta$ and $\alpha$ such that it is formed by two parts of circles of radius $\epsilon$ centered at $\beta$ and $\alpha$, respectively; together with the segments joining them (see the red curve in Figure \ref{path}). We can assume that all the curves are positively oriented. Then, due to previous limits it is easy to check that 
\begin{equation}
I(d)=-\frac{1}{2}\lim_{\epsilon\rightarrow 0}\int_{\sigma_*} f(z)\,dz\, . \nonumber
\end{equation} 

\begin{figure}[H]
\begin{center}
\includegraphics[width=.5\textwidth]{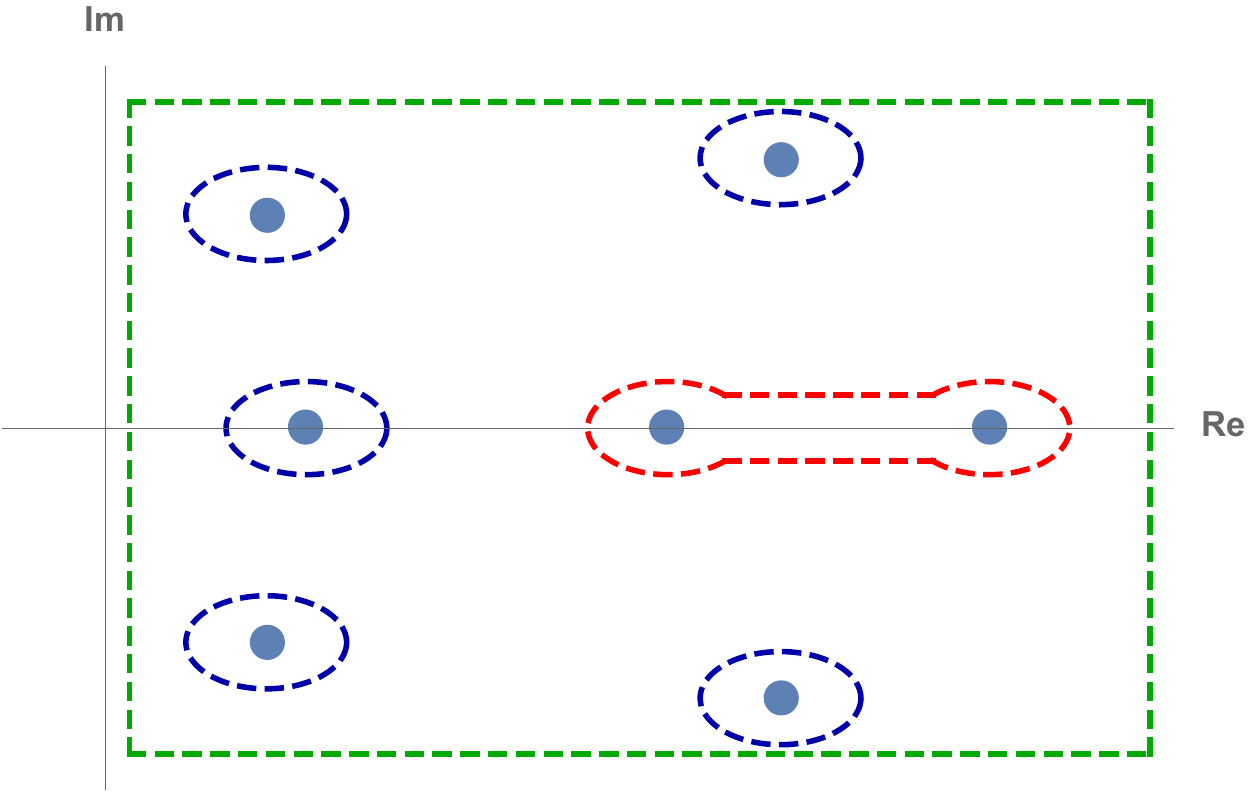}
\end{center}
\caption{Representation of the curves $\sigma$ (in green), $\sigma_\omega$ for each $\omega$ (in blue) and $\sigma_*$ that surrounds the singularities $\beta$ and $\alpha$ (in red).}\label{path}
\end{figure}

If we call $U$ to the region whose boundary is $\sigma$, $\sigma_*^-$ and $\sigma_\omega^-$ for all $\omega$ as above (the enclosed region, see Figure \ref{path}) we have that $f(z)$ is holomorphic in $U$ and, as a consequence,
\begin{equation}
\int_{\sigma} f(z)\,dz=\int_{\sigma_*\cup\sigma_{u_o}\cup\sigma_{\omega_1}\cup\sigma_{\bar{\omega}_1}\cup\sigma_{\omega_2}\cup\sigma_{\bar{\omega}_2}} f(z)\,dz\, . \nonumber
\end{equation}
Moreover, denoting $g(z)=(z-u_o)\,f(z)$ we have that in the region surrounded by $\sigma_{u_o}$, $g(z)$ is analytic. Thus, we apply Cauchy's Integral Formula to compute
\begin{equation}
\int_{\sigma_{u_o}} f(z)\,dz=2\,\pi\,i\left(\frac{1}{2\,\pi\,i} \int_{\sigma_{u_o}} \frac{g(z)}{z-u_o}\, dz\right)=2\,\pi\,i\, g(u_o)=2\,\pi\,i\, \Res_{z=u_o}\, f(z)=2\,\pi\, .\nonumber
\end{equation}
Furthermore, arguing similarly we can check that the sum of the following path integrals vanishes
\begin{equation}
\int_{\sigma_{\omega_2}} f(z)\,dz+\int_{\sigma_{\bar{\omega}_2}} f(z)\,dz=2\,\pi\,i\,\left(\Res_{z=\omega_2}\,f(z)+\Res_{z=\bar{\omega}_2}\,f(z)\right)=0\, .\nonumber
\end{equation}
On the other hand, by applying the Cauchy's Integral Formula once more, we get
\begin{equation}
\int_{\sigma_{\omega_1}}f(z)\,dz=\int_{\sigma_{\bar{\omega}_1}}f(z)\,dz=0\,.\nonumber
\end{equation}
That is, we conclude that
\begin{equation}
\int_{\sigma_*}f(z)\,dz=\int_{\sigma}f(z)\,dz-2\,\pi\,.\nonumber
\end{equation}
Finally, observe that along $\sigma$, $f(z)\rightarrow 0$ whenever $d$ goes to infinity, therefore,
\begin{equation}
\lim_{d\rightarrow \infty}\int_{\sigma_*} f(z)\,dz=\lim_{d\rightarrow\infty} \left(\int_{\sigma}f(z)\,dz-2\,\pi\right)=\int_{\sigma}\lim_{d\rightarrow\infty} f(z)\,dz-2\,\pi=-2\,\pi\, .\nonumber
\end{equation}
Then, considering $\epsilon$ going to zero we get,
\begin{equation}
\lim_{\epsilon\rightarrow 0}\lim_{d\rightarrow\infty}\int_{\sigma_*}f(z)\,dz=-2\,\lim_{d\rightarrow\infty} \,I(d)=-2\,\pi\, . \nonumber
\end{equation}
That is, $\lim_{d\rightarrow\infty}\, I(d)=\pi$, which finishes the second part of the proof.

\subsection{Part (iii)}

Finally, in this last part, we will prove that the function $I(d)=\sqrt{\rho\,d}\,\Lambda(d)$ is monotonically decreasing on $d$. Let us consider the extension to the complex plane introduced in part (ii), \eqref{f(z)}. Then, we know that
\begin{equation}
I(d)=-\frac{1}{2}\lim_{\epsilon\rightarrow 0}\int_{\sigma_*} f(z)\,dz=-\frac{1}{2}\lim_{\epsilon\rightarrow 0}\left(\int_{\sigma}f(z)\,dz-2\,\pi\right).\nonumber
\end{equation}
Thus, if we differentiate above equation we get,
\begin{equation}
I'(d)=-\frac{1}{2}\lim_{\epsilon\rightarrow 0}\left(\frac{\partial}{\partial d}\int_{\sigma}f(z)\,dz\right)=-\frac{1}{2}\lim_{\epsilon\rightarrow 0}\int_{\sigma}f_d(z)\,dz\nonumber
\end{equation}
where
\begin{equation}
f_d(z)=\frac{\partial f}{\partial d}(z)=\frac{18\sqrt{\rho}\,z^5\left(16\,d\,z^6\left(9z-32d\right)+\rho\,z^3\left(16d+9z\right)+\rho^2\right)}{\sqrt{d}\left(16\,d\,z^3-\rho\right)^2 h(z)\left(-9z^4+16\,d\,z^3-\rho\right)}\, . \nonumber
\end{equation}
Moreover, by a similar argument to that of part (ii) and using Cauchy's Integral Formula again we have that
\begin{equation}
\int_{\sigma}f_d(z)\,dz=\int_{\sigma_*}f_d(z)\,dz\,.\nonumber
\end{equation}
That is, combining everything, we obtain that
\begin{equation}
I'(d)=-\frac{1}{2}\lim_{\epsilon\rightarrow 0}\int_{\sigma}f_d(z)\,dz=-\frac{1}{2}\lim_{\epsilon\rightarrow 0}\int_{\sigma_*}f_d(z)\,dz=\int_\beta^\alpha f_d(u)\, du<0\, ,\nonumber
\end{equation}
where last inequality comes from the fact that
\begin{eqnarray}
16 d\left(32d-9u\right)u^6-\rho\left(16d+9u\right)u^3-\rho^2>16d\left(32d-9u\right)u^6-32\rho\, du^3&=&\nonumber\\=16d u^3\left(-9u^4+32 d u^3-2\rho\right)>144d u^7&>&0\,.\nonumber
\end{eqnarray}
That is, $I(d)=\sqrt{\rho\,d}\,\Lambda(d)$ decreases monotonically.

In conclusion, combining parts (i) to (iii) we have that the function $I(d)=\sqrt{\rho\,d}\,\Lambda(d)$ monotonically decreases from $\sqrt{2}\pi$ (obtained when $d\rightarrow d_*$) to $\pi$ (which corresponds with $d\rightarrow\infty$). This concludes the proof of Lemma \ref{lemma}.
\\

\noindent{\bf Acknowledgments.} The authors would like to thank the referees for their valuable comments which have helped to improve the manuscript.

\end{document}